\documentclass{amsart}
\usepackage[dvipsnames]{xcolor}
\usepackage{amsmath,amsfonts,amsthm,amssymb,mathrsfs,latexsym,paralist,comment, bbold, epsfig, bbm}
\usepackage[normalem]{ulem} 
\usepackage{tikz,pgfpages}
\usepackage{cancel} 
\usepackage{natbib,hyperref}

\newcommand{\CC}{\mathbb{C}}

\newcommand{\NN}{\mathbb{N}}
\newcommand{\ZZ}{\mathbb{Z}}

\makeatletter
\newcommand{\myitem}[1]{%
\item[#1]\protected@edef\@currentlabel{#1}%
}
\makeatother 

\newtheorem{thm}{Theorem}[section]

\newtheorem{cor}[thm]{Corollary}
\newtheorem{lemma}[thm]{Lemma}

\theoremstyle{remark}
\newtheorem{rmk}[thm]{Remark}

\newtheorem{example}[thm]{Example}
\newtheorem*{example*}{Example}

\theoremstyle{definition}
\newtheorem{defn}{Definition}


	\newtheorem{lem}[thm]{Lemma}

	\theoremstyle{definition}

	\theoremstyle{remark}

    \newcommand{\Kk}{\mathcal{K}}

\title{A New Geometric Morita Invariant for Higher Rank Graph $C^*$-algebras}

\author[M. Amann]{Mackenzie Amann}
\address{Department of Mathematics, University of Nebraska-Lincoln, 203 Avery Hall,
P.O. Box 880130,
Lincoln, NE 68588-0130 USA}
\email{mamann2@huskers.unl.edu}

\author[L. Gallagher]{Liam Gallagher}
\address{Department of Mathematics, Lehigh University, Chandler-Ullmann Hall, 17 Memorial Drive East,
Bethlehem, PA 18015 USA}
\email{lig225@lehigh.edu}

\author[R. Norton]{Rachael Norton}
    \address{Department of Mathematics, Statistics, and Computer Science, St. Olaf College, 1520 St. Olaf Ave, Northfield, MN 55057 USA}
    \email{norton10@stolaf.edu}

    \author[E. Ruiz]{Efren Ruiz}
    \address{Department of Mathematics, University of Hawaii, Hilo, 200 W. Kawili St., Hilo,
Hawaii, 96720-4091 USA
}
    \email{ruize@hawaii.edu}
\date{\today}

\begin{document}
\maketitle

\begin{abstract}
    Higher rank graphs, also known as $k$-graphs, are a $k$-dimensional generalization of directed graphs and a rich source of examples of $C^*$-algebras and $\ZZ^k$-graded rings. In the present paper, we contribute to the geometric classification program for $k$-graph algebras by introducing a new move on $k$-graphs, called LiMaR-split, which is a generalization of outsplit for directed graphs. We show that LiMaR-split preserves the $k$-graph $C^*$-algebras up to equivariant Morita equivalence and preserves the Kumjian-Pask algebras up to graded Morita equivalence.  
\end{abstract}
\textbf{Keywords:} $k$-graph, higher rank graph, $C^*$-algebra, Kumjian-Pask algebras, outsplit, geometric classification

\section{Introduction}
This paper contributes to the geometric classification program of $k$-graph algebras, which was begun by Eckhardt et al.~in \cite{efgggp} and extends the geometric approach to classifying directed graph $C^*$-algebras. Geometric classification focuses on describing a set of changes that can be made to a graph, called \textit{graph moves}, that preserve the Morita equivalence class of the associated $C^*$-algebra. It provides an alternative, and complementary, approach to classification by $K$-theoretic invariants (e.g. Elliott's classification program (\cite{TWW17}, \cite{EGLN15}, \cite{GLN21b},\cite{GLN21a})), and its origins trace back to the 1970's and 80's. In \cite{PS75}, Parry and Sullivan showed that flow-equivalence of non-negative integral matrices is generated by two moves on the matrices. Since non-negative integral matrices are used in symbolic dynamics to construct subshifts of finite type, Franks used Parry and Sullivan's result to classify irreducible subshifts of finite type up to flow-equivalence in \cite{Fra84}. Cuntz and Krieger then associated to any non-negative integral matrix with no zero rows or columns a $C^*$-algebra $\mathcal{O}_A$ and showed that if $B$ and $C$ are flow-equivalent irreducible matrices, then $\mathcal{O}_B$ and $\mathcal{O}_C$ are Morita equivalent (\cite{CK80}). Viewing $\mathcal{O}_B$ and $\mathcal{O}_C$ as graph algebras with adjacency matrices $B$ and $C$, respectively, the matrix moves that implement flow-equivalence of $B$ and $C$ can be interpreted as graph moves.

It is this line of thinking that S{\o}rensen credits in \cite{Sorensen}, where he describes five graph moves -- source deletion \texttt{(S)}, insplit \texttt{(I)}, outsplit \texttt{(O)}, reduction \texttt{(R)}, and Cuntz splice \texttt{(C)} -- and proves that \texttt{(S)}, \texttt{(I)}, \texttt{(O)}, and \texttt{(R)} preserve the Morita equivalence class of the associated graph $C^*$-algebra, assuming it is simple and unital. That \texttt{(C)} also preserves Morita equivalence is proven in \cite{ERRS-CuntzSplice}. Then, Eilers et al.~in \cite{ERRS21} define a sixth move -- enclosing a cyclic component \texttt{(P)} -- on directed graphs with finitely many vertices that preserves the Morita equivalence class of the associated (unital) $C^*$-algebra. Moreover, they prove that any two unital graph $C^*$-algebras $C^*(E)$ and $C^*(F)$ are Morita equivalent if and only if $E$ can be obtained from $F$ by performing a finite sequence of these six moves and their inverses. This completes the geometric classification of unital graph $C^*$-algebras, which importantly includes all Cuntz-Krieger algebras. 

A next natural step is to seek similar results for higher rank graph $C^*$-algebras. Introduced by Kumjian and Pask in \cite{KP}, higher rank graphs, also known as $k$-graphs, are a generalization of directed graphs to $k$ dimensions and provide a richer class of examples of $C^*$-algebras. Intuitively, a $k$-graph is a directed graph with edges in $k$ colors satisfying a so-called \textit{factorization property} (see Definition \ref{def: k-graph} for details). In fact, in \cite{Hazlewood} Hazlewood et al.~formalize the relationship between $k$-graphs and graphs with edges in $k$ colors (also called $k$-colored graphs), a relationship we rely on heavily in this paper.

To classify $k$-graph $C^*$-algebras geometrically, we seek to identify a finite set of moves that can be used to pass between any two $k$-graphs whose $C^*$-algebras are Morita equivalent. Eckhardt et al.~initiate the geometric classification program for $C^*$-algebras of row-finite, source-free $k$-graphs in \cite{efgggp}. They generalize four moves (insplitting, delay, sink deletion, and reduction) used in classifying directed graph $C^*$-algebras to the setting of $k$-graphs and prove that, under certain assumptions, each move yields a $k$-graph whose $C^*$-algebra is Morita equivalent to the original $C^*$-algebra; in the case of insplitting, the $C^*$-algebras are actually isomorphic. Their work lays the foundation for Lippert's definition of reduction in \cite{lippert2024} and a generalization of insplitting described by Hargreaves, Nemeth, and Oberbillig in \cite{hargreaves}. Lippert weakens the hypotheses assumed by Eckhardt et al., extending their definition of reduction to a move that acts as a true inverse of delay and is robust under products. Importantly, Lippert introduces a notion of neighborhood on which reduction takes place, making it a local rather than global move. Similarly, Hargreaves et al.~define an alternative to insplitting, called IKI-splitting, which acts on a neighborhood. A benefit of IKI-split is that it does not rely on the pairing condition required for the insplit defined by Eckhardt et al. Similarly to Eckhardt et al., in \cite{hargreaves} Hargreaves et al.~show that the $C^*$-algebra of the IKI-split is isomorphic to the $C^*$-algebra of the original $k$-graph. Lastly, Listhartke defines outsplit for $k$-graphs at a single vertex in \cite{listhartke}. Like the insplit defined by Eckhardt et al., Listhartke's outsplit relies on a pairing condition, which restricts the applicability of the move.

In this paper, we present a new definition of outsplit called LiMaR-split. Inspired by the success of Hargreaves et al., the aim of this project was to describe an outsplit, defined on a neighborhood, which did not rely on a pairing condition. Indeed, LiMaR-split acts on a neighborhood, not at a single vertex like Listhartke's outsplit.  We show in Corollary~\ref{cor:stb} that LiMaR-split preserves the Morita equivalence class of the associated $C^*$-algebras.  In fact, we prove something stronger; it is shown that the $C^*$-algebras associated to a $k$-graph and its LiMaR-split are stably $*$-isomorphic via a $*$-isomorphism that preserves the diagonal subalgebras and commutes with the $k$-torus actions on the $C^*$-algebras.  An immediate consequence of our main result is that the $\NN^k$-actions associated to the $k$-graph and its LiMaR-split are conjugate (Corollary~\ref{cor:dynamics}).

The proof of Corollary~\ref{cor:stb} is purely algebraic, where we mainly work with Kumjian-Pask algebras.  These algebras were introduced by Aranda Pino, Clark, an Huef, and Raeburn in 2013 in \cite{KPAlgebras} and are an algebraic analogue to $k$-graph $C^*$-algebras in the same way that Leavitt path algebras are an algebraic analogue of directed graph $C^*$-algebras.  For a $k$-graph $\Lambda$ and its LiMaR-split $\Gamma$, our main algebraic result (Theorem~\ref{thm-KPA-new}) shows that the Kumjian-Pask algebra $\mathrm{KP}_R(\Lambda)$ is $\ZZ^k$-graded $*$-isomorphic to a corner of $\mathrm{KP}_R(\Gamma)$, and the $\ZZ^k$-graded $*$-isomorphism sends the diagonal to a corner of the diagonal.  We then prove our main $C^*$-algebraic result by exploiting the fact that the Kumjian-Pask algebra over $\CC$ of a $k$-graph is a dense subalgebra of the $k$-graph $C^*$-algebra.

The structure of the paper is as follows. In Section \ref{sec: preliminaries} we recall pertinent definitions and theorems about $k$-graphs and their $C^*$-algebras. In particular, we state Theorem \ref{KGs}, which is due to Hazlewood et al.~in \cite{Hazlewood} and explains how to view $k$-graphs as $k$-colored directed graphs that satisfy certain additional conditions. We also review relevant definitions and theorems about Kumjian-Pask algebras.  In Section \ref{sec: limar-split} we define the LiMaR-split of a source-free, row-finite $k$-graph and prove that, under the mild assumption of sink-free, the resulting graph is indeed a $k$-graph (Theorem \ref{thm:kgraph}). Here we rely heavily on the perspective of $k$-graphs as $k$-colored graphs that satisfy Theorem \ref{KGs}. The goal of the final section, Section \ref{sec: ME}, is to investigate the relationship between $C^*(\Lambda)$ and $C^*(\Gamma)$ as well as the relationship between the Kumjian-Pask algebra over $\Lambda$ and the Kumjian-Pask algebra over $\Gamma$, where $\Gamma$ is the LiMaR-split of $\Lambda$.  We proceed to prove a few technical lemmas which hold in the Kumjian-Pask algebra setting as well as the $C^*$-algebra setting, although we only state the results for Kumjian-Pask algebras.  These technical lemmas are used to prove our main results, Theorem~\ref{thm-KPA-new} in the algebraic setting and Corollary~\ref{cor:stb} in the $C^*$-algebraic setting.

\section{Preliminaries} \label{sec: preliminaries}
\subsection{Higher rank graphs}
We let $\NN=\{0,1,2,\ldots\}$ denote the monoid of natural numbers under addition and $\NN^+ = \{1,2,\ldots\}$ denote the natural numbers excluding $0$. For $k \in \NN^+$, we view $\NN^k$ as a category where composition of morphisms is given by addition. We let $\{\mathbf{e}_i\}_{1 \leq i \leq k}$ denote the standard basis of $\NN^k$. For $\mathbf{n} = \sum_{i=1}^kn_i\mathbf{e}_i \in \NN^k$, we write $|\mathbf{n}| = \sum_{i=1}^k n_i$.

The objects of study in this paper are higher rank graphs, also known as $k$-graphs. These objects are generalizations of directed graphs, with $k$ representing the number of different colors edges can have. When we draw $2$-graphs, we will use red (solid) edges and blue (dashed) edges. We now present the formal definition of $k$-graphs, which first appeared in \cite{KP}.

\begin{defn} \label{def: k-graph} \cite[Definition 1.1]{KP}
Fix $k \in \mathbb{N}^+$. Let $\Lambda$ be a countable small category and $d: \Lambda \to \NN^k$ be a functor. If $(\Lambda, d)$ satisfies the \textit{factorization rule}, that is, if for every $\lambda \in \Lambda$ and $\mathbf{m},\mathbf{n} \in \NN^k$ such that $d(\lambda) = \mathbf{m} + \mathbf{n}$, there are unique elements $\mu, \nu \in \Lambda$ such that $\lambda = \mu\nu$, $d(\mu) = \mathbf{n}$, and $d(\nu) = \mathbf{m}$, then $(\Lambda, d)$ is a \textit{k-graph}.
\end{defn}

For a $k$-graph $\Lambda$, we write $\Lambda^1 = \{\lambda \in \Lambda: |d(\lambda)| = 1\}$ and $\Lambda^0 = d^{-1}(\mathbf{0})$. If $e$ is in $\Lambda^1$, we say $e$ is an \textit{edge} in $\Lambda$. For $v \in \Lambda^0$, we say $v$ is a \textit{vertex}. We  define $\Lambda^*$ to be the set of all finite paths, that is all finite sequences of edges in $\Lambda$, and we define the length of a path $\lambda$, denoted $|\lambda|$ to be the number of edges in the path. That is, $|\lambda| = |d(\lambda)|$.

Because of factorization rules, for each $\lambda \in \Lambda$, we are guaranteed the existence of unique $v, w \in \Lambda^0$ such that $v\lambda w = \lambda$. In this case, we will write $r(\lambda) = v$ and $s(\lambda) = w$. For any $v \in \Lambda^0$ and $\mathbf{n} \in \NN^k$, we write
\begin{align*}
    & v\Lambda = \{\lambda \in \Lambda : r(\lambda) = v\} \\
    & v\Lambda^{\mathbf{n}} = \{\lambda \in v\Lambda : d(\lambda) = \mathbf{n}\}.
\end{align*}
The sets $\Lambda v$ and $\Lambda^{\mathbf{n}} v$ are defined analogously. We will occasionally use the notation $r^{-1}_{\mathbf{n}}(v)$ in place of $v\Lambda^{\mathbf{n}}$ and $s^{-1}_{\mathbf{n}}(v)$ in place of $\Lambda^{\mathbf{n}} v$.
Let $\mathbb{1} = \sum_{i=1}^k \mathbf{e}_i$. Then $\Lambda^{\mathbb{1}}$ denotes the set of rainbow paths of length $k$ in $\Lambda$. 

\begin{defn}\label{rowfinite}
A $k$-graph $\Lambda$ is \textit{row-finite} if for every $v \in \Lambda^0$ and each $\mathbf{e}_i \in \NN^k$ the set $v\Lambda^{\mathbf{e}_i}$ is finite.
\end{defn}

\begin{defn}\label{sourcefree}
Let $\Lambda$ be a $k$-graph. If $v\Lambda^{\mathbf{e}_i} \neq \emptyset$ for all $v \in \Lambda^0$ and every $\mathbf{e}_i \in \NN^k$, then $\Lambda$ is \textit{source-free}. 
\end{defn}
 
\begin{defn} \label{defn: d(B) sink}
    Let $\Lambda$ be a $k$-graph. We say $v \in \Lambda^0$ is a \textit{degree $\mathbf{e}_i$ sink} if $\Lambda^{\mathbf{e}_i}v=\emptyset$ for some $1 \leq i \leq k$. We say $\Lambda$ is \textit{degree $\mathbf{e}_i$ sink-free} if it has no degree $\mathbf{e}_i$ sinks.
\end{defn}

\textit{In this paper, we will assume that all $k$-graphs are row-finite and source-free.} Any additional assumptions will be made explicit in our theorem statements.

Similarly to \cite{Hazlewood}, we model $k$-graphs using $k$-colored graphs called \textit{$1$-skeletons} by letting $G = (G^0, G^1, r, s)$ denote a directed graph where $G^0$ is its set of vertices, $G^1$ its set of edges, and $r,s: G^1 \to G^0$ are the range and source maps, respectively. For an integer $n \geq 2$, we let $G^n$ denote the paths of length $n$ in $G$. 

Then, we color the graph $G$ by assigning each edge one of the standard basis vectors $\mathbf{e}_i$ of $\NN^k$, letting $G^{\mathbf{e}_i}$ be the set of edges assigned to $\mathbf{e}_i$, such that $G^1 = \bigcup_{i=1}^k G^{\mathbf{e}_i}$. Then, we can assign to the path category $G^* = \bigcup_{n \in \NN} G^n$ a degree functor $d: G^* \to \NN^k$, where $d(v) = \mathbf{0}$ for all $v \in G^0$. For edges $f \in G^1$, $d(f) = \mathbf{e}_i$ if $f$ was assigned the basis vector $\mathbf{e}_i$. On paths, $d$ is extended to be additive, that is $d(f_n\dotsb f_1) = \sum_{i=1}^n d(f_i)$. Additionally, the range and source maps $r,s: G^1 \to G^0$ extend to well-defined maps from $G^* \to G^0$ and will also be denoted by $r$ and $s$.

The following theorem makes explicit the relationship between $k$-colored directed graphs and $k$-graphs.

\begin{thm} \label{KGs} \cite[Theorems 4.4 and 4.5]{Hazlewood}\cite[Section 2]{efgggp} 

If $G$ is a $k$-colored directed graph equipped with a degree map $d: G^* \to \NN^k$ and range and source maps $r,s: G^1 \to G^0$, then $(\Lambda = G^* /\sim,d)$ is a $k$-graph for any $(r,s,d)$-preserving equivalence relation $\sim$ on $G^*$ which also satisfies the following four conditions:

\begin{enumerate}
\myitem{(KG0)} \label{KG0} If $\lambda \in G^*$ is a path such that $\lambda = \lambda_2\lambda_1$, then $[\lambda] = [p_2p_1]$ whenever $p_1 \in [\lambda_1]$ and $p_2 \in [\lambda_2]$.

\myitem{(KG1)} \label{KG1} If $f,g \in G^1$ are edges, then $f \sim g$ if and only if $f = g$.

\myitem{(KG2)} \label{KG2} \textbf{Completeness:} For every $\mu = \mu_2\mu_1 \in G^2$ such that $d(\mu_1) = \mathbf{e}_i$, $d(\mu_2) = \mathbf{e}_j$, there exists a unique $\upsilon = \upsilon_2\upsilon_1 \in G^2$ such that $d(\upsilon_1) = \mathbf{e}_j$, $d(\upsilon_2) = \mathbf{e}_i$, and $\mu \sim \upsilon$.

\myitem{(KG3)} \label{KG3} \textbf{Associativity:} For any $\mathbf{e}_i-\mathbf{e}_j-\mathbf{e}_{\ell}$ path $abc \in G^3$ with $i, j, \ell$ all distinct, the $\mathbf{e}_{\ell}-\mathbf{e}_j-\mathbf{e}_i$ paths $hjg, nrq$ constructed via the following two routes are equal.
\end{enumerate}

\begin{minipage}[r]{0.35\textwidth}
\begin{align*}
\text{Route 1: } & \text{Let } ab \sim de, \text{so } abc \sim dec \\
& \text{Let } ec \sim fg, \text{so } abc \sim dfg\\
& \text{Let } df \sim hj, \text{so } abc \sim hjg\\
\end{align*}
\end{minipage}
\quad 
\begin{minipage}[r]{0.35\textwidth}
\begin{align*}
\text{Route 2: } & \text{Let } bc \sim km, \text{so } abc \sim akm \\
& \text{Let } ak \sim np, \text{so } abc \sim npm\\
& \text{Let } pm \sim rq, \text{so } abc \sim nrq\\
\end{align*}
\end{minipage}

Note that if $\lambda\sim\mu$ in a $k$-graph, we say that $\lambda$ \emph{commutes} with $\mu$. 
\end{thm}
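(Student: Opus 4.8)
The plan is to verify the two requirements of Definition \ref{def: k-graph}: that $\Lambda = G^*/\sim$ is a countable small category carrying a well-defined degree functor, and that it satisfies the factorization rule. The first requirement is essentially formal. Since $G$ is a countable directed graph, its path category $G^*$ is small and countable with composition given by concatenation; condition \ref{KG0} says exactly that $\sim$ is respected by concatenation on both sides, so $\sim$ is a congruence and $\Lambda = G^*/\sim$ inherits a category structure. Because $\sim$ is $(r,s,d)$-preserving, the maps $r$, $s$, $d$ descend to $\Lambda$, and additivity of $d$ on $G^*$ makes the descended $d$ a functor to $\NN^k$; condition \ref{KG1} guarantees that the degree-one morphisms of $\Lambda$ are exactly the edges of $G$. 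The real content of the theorem lies in the factorization rule.

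For factorization I would argue by induction on the length $|\lambda| = |\mathbf{m}| + |\mathbf{n}|$, reducing the general statement to the two-edge, two-color case, which is precisely Completeness \ref{KG2}. Given $[\lambda] \in \Lambda$ with $d(\lambda) = \mathbf{m} + \mathbf{n}$, pick a representative $\lambda = f_\ell \cdots f_1 \in G^*$, viewed as a word in colored edges with $f_1$ at the source end. I want a representative in which the rightmost $|\mathbf{m}|$ edges carry total degree $\mathbf{m}$ and the remaining edges carry degree $\mathbf{n}$; reading off these two blocks then yields paths $\nu$ and $\mu$ with $[\lambda] = [\mu][\nu]$, $d(\nu) = \mathbf{m}$, and $d(\mu) = \mathbf{n}$. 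Such a representative is produced by repeatedly commuting adjacent edges of different colors, each swap being legal and uniquely determined by \ref{KG2}, with \ref{KG0} allowing a swap to be applied to any adjacent pair inside the longer word. This is a finite ``bubble sort'' on colors that terminates once the prescribed grouping is achieved, establishing existence.

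The delicate point is that the sorted form must be \emph{well defined} independently of the particular sequence of swaps, and that the resulting factorization is unique; this is exactly where Associativity \ref{KG3} is needed. When three edges of three distinct colors are to be permuted, there are two natural orders in which to perform the pairwise swaps, and \ref{KG3} asserts that both routes produce the same path. This is the coherence condition that promotes the local reversibility of single swaps to global confluence of the rewriting process, so that any two swap-sequences carrying $\lambda$ into the prescribed color order yield $\sim$-equivalent words. I expect this confluence argument to be the main obstacle: one must show that an arbitrary rearrangement decomposes into the two-color moves of \ref{KG2} and the three-color moves of \ref{KG3}, applied locally via \ref{KG0}, analogous to reducing an arbitrary braid word to generators and the Yang--Baxter relation.

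Granting confluence, uniqueness follows: if $[\mu][\nu] = [\mu'][\nu']$ with $d(\nu) = d(\nu') = \mathbf{m}$ and $d(\mu) = d(\mu') = \mathbf{n}$, then any two representatives of $[\lambda]$ sorted at degree $\mathbf{m}$ agree up to $\sim$ on each block, forcing $[\mu] = [\mu']$ and $[\nu] = [\nu']$. Combining existence of the sorted factorization with this confluence-based uniqueness gives the factorization rule, and hence $(\Lambda, d)$ is a $k$-graph.
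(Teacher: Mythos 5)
This theorem is not proved in the paper at all: it is imported verbatim from Hazlewood et al.\ (Theorems 4.4 and 4.5) and from the summary in Eckhardt et al., so there is no in-paper argument to compare against. Your outline does match the strategy of the cited proof: the category structure and the descent of $r$, $s$, $d$ are formal consequences of \ref{KG0} and $(r,s,d)$-preservation, and the factorization rule is obtained by sorting a representative word into a prescribed color order using the squares of \ref{KG2}, with \ref{KG3} supplying the coherence needed to make the sorted form independent of the order of swaps.

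The gap is that the confluence step, which you correctly identify as the entire content of the theorem, is asserted rather than proved, and your sketch omits the cases where it is not simply an instance of \ref{KG3}. Concretely: (i) \ref{KG3} only governs three adjacent edges of three \emph{distinct} colors, so when two of the three adjacent edges being permuted share a color you must instead invoke the uniqueness clause of \ref{KG2} to see that the two swap orders agree -- this case has to be argued separately; (ii) establishing that \emph{any} two swap-sequences carrying $\lambda$ to the target color order give $\sim$-equivalent words requires reducing an arbitrary pair of sequences to a chain of these local three-edge comparisons, which is a genuine induction (on $|\lambda|$ and on the number of colors involved), not a one-line appeal to the braid analogy; and (iii) your final uniqueness paragraph presupposes that $\mu\nu\sim\mu'\nu'$ with $d(\nu)=d(\nu')$ forces $\mu\sim\mu'$ and $\nu\sim\nu'$, but that cancellation property is logically equivalent to the uniqueness half of the factorization rule and so cannot be assumed -- it must come out of the same confluence argument. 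As written, the proposal is a correct plan whose central lemma is left unproved.
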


In order to define LiMaR-split on $k$-graphs, we will use this association between $1$-skeletons with equivalence relations and $k$-graphs. More precisely, we will define LiMaR-split on a $k$-graph $\Lambda$ by describing how it impacts the $1$-skeleton $G$ and the equivalence relation $\sim$ associated with $\Lambda$. This produces a new $1$-skeleton and equivalence relation, denoted $G_L$ and $\sim_{L}$ respectively. Then, in order to show $G_L^* / \sim_{L}$ is a $k$-graph, we show that $\sim_{L}$ satisfies \ref{KG0}-\ref{KG3}.

\subsection{Higher rank graph $C^*$-algebras}
In this subsection we introduce key definitions related to higher rank graph $C^*$-algebras. For a more complete review of higher rank graph $C^*$-algebras, we refer the reader to \cite[Section 1]{KP}. 

\begin{defn}\label{def: CKs} \cite[Definition 1.5]{KP}\cite[Definition 7.4]{KPS}
Let $\Lambda$ be a row-finite, source-free k-graph. A \textit{Cuntz-Krieger $\Lambda$-family} is a collection of projections $\{P_v: v \in \Lambda^0 \}$ and partial isometries $\{T_e: e \in \Lambda^1\}$ satisfying the \textit{Cuntz-Krieger relations}:

\begin{enumerate}
\myitem{(CK1)} \label{CK1} The projections $P_v$ are mutually orthogonal.

\myitem{(CK2)} \label{CK2} If $a,b,f,g \in \Lambda^1$ satisfy $af \sim gb$, then $T_aT_f = T_gT_b$.

\myitem{(CK3)} \label{CK3} For any $f \in \Lambda^1$, we have $T_f^*T_f = P_{s(f)}$.

\myitem{(CK4)} \label{CK4} For any $v \in \Lambda^0$ and any $1 \leq i \leq k$, we have $P_v = \sum\limits_{f\in v\Lambda^{\mathbf{e}^i}}T_fT_f^*$. 
\end{enumerate}

These generators and relations give rise to a universal $C^*$-algebra denoted $C^*(\Lambda) = C^*(\{p_v,t_f\})$. Whenever possible, we will use lowercase letters to denote the universal Cuntz-Krieger $\Lambda$-family $\{p_v, t_f\}$. For any Cuntz-Krieger $\Lambda$-family $\{P_v, T_f\}$, we then have a surjective $*$-homomorphism $\pi : C^*(\Lambda) \to C^*(\{P_v, T_f\})$ such that $\pi(p_v) = P_v$ and $\pi(t_f) = T_f$ for all $v \in \Lambda^0$, $f \in \Lambda^1$.
\end{defn}

\begin{rmk} \label{rmk: completeCK4} The relation \ref{CK4} is sometimes stated for paths instead of edges as follows: For any $v \in \Lambda^0$ and for any $\mathbf{n} \in \mathbb{N}^k$, we have $P_v = \sum_{f \in v\Lambda^{\mathbf{n}}} T_f T_f^*$. An induction proof shows that this statement is equivalent to our \ref{CK4} (c.f. \cite[Lemma 3.2]{RS2}).
\end{rmk}

\begin{lemma}\label{lem: consequences of CKs} \cite[Remark 2.2]{efgggp}
If $\{P_v, T_f\}$ is a Cuntz-Krieger $\Lambda$-family, then  $T_fP_{s(f)} = T_f$ and $P_{r(f)}T_f = T_f$ for all $f \in \Lambda^1$. 
\end{lemma}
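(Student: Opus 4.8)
The plan is to deduce both identities purely algebraically from the Cuntz--Krieger relations, the key background fact being that each $T_f$ is a partial isometry and therefore satisfies $T_f T_f^* T_f = T_f$; this is immediate from \ref{CK3}, which guarantees that $T_f^* T_f = P_{s(f)}$ is a projection.

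The identity $T_f P_{s(f)} = T_f$ would come directly from \ref{CK3}: substituting $P_{s(f)} = T_f^* T_f$ gives $T_f P_{s(f)} = T_f T_f^* T_f = T_f$ by the partial-isometry property, so no other relation is required.

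For $P_{r(f)} T_f = T_f$, the strategy is to isolate the single term indexed by $f$ in the Cuntz--Krieger sum. Writing $\mathbf{e}_i = d(f)$, so that $f \in r(f)\Lambda^{\mathbf{e}_i}$, relation \ref{CK4} reads $P_{r(f)} = \sum_{g \in r(f)\Lambda^{\mathbf{e}_i}} T_g T_g^*$. Since every summand is a projection and $f$ occurs in the index set, the remainder $P_{r(f)} - T_f T_f^* = \sum_{g \neq f} T_g T_g^*$ is positive, so $T_f T_f^* \leq P_{r(f)}$. Invoking the standard fact that a projection dominated by another is fixed under left multiplication by the larger one, I obtain $P_{r(f)} (T_f T_f^*) = T_f T_f^*$, and hence
$$P_{r(f)} T_f = P_{r(f)} T_f T_f^* T_f = \bigl(P_{r(f)} T_f T_f^*\bigr) T_f = T_f T_f^* T_f = T_f,$$
using $T_f = T_f T_f^* T_f$ at the first and last steps.

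The one point deserving care, and the only place where more than a substitution is involved, is the implication $T_f T_f^* \leq P_{r(f)} \Rightarrow P_{r(f)}(T_f T_f^*) = T_f T_f^*$. I would either cite this as a standard property of projections in a $C^*$-algebra or prove it in a line: from $Q \leq P$ one has $(1-P)Q(1-P) \leq (1-P)P(1-P) = 0$, forcing $Q(1-P) = 0$ and thus $PQ = Q$. Alternatively, one can avoid the order-theoretic step entirely by showing directly that the range projections $\{T_g T_g^*\}_{g \in r(f)\Lambda^{\mathbf{e}_i}}$ are mutually orthogonal---which follows because their sum $P_{r(f)}$ is itself a projection---so that $T_g T_g^* T_f = 0$ for $g \neq f$ and only the $f$-term survives. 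Everything else is routine.
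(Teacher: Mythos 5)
Your proposal is correct and follows essentially the same route as the paper: the first identity is the same one-line application of \ref{CK3} plus the partial-isometry property, and the second rests on the same key observation that $T_fT_f^*$ is one summand of the projection $P_{r(f)} = \sum_{g \in r(f)\Lambda^{\mathbf{e}_i}} T_gT_g^*$ given by \ref{CK4}. The only cosmetic difference is that you extract the $f$-term via the domination $T_fT_f^* \leq P_{r(f)}$ and the standard fact that a dominated projection is absorbed, whereas the paper uses the equivalent observation that the summands are mutually orthogonal (your stated alternative); both are valid and yield the same computation.
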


Before proving the lemma we note that, combined with the orthogonality of the projections, the lemma implies that multiplication in $C^*(\Lambda)$ is compatible with concatenation in $\Lambda$. 

\begin{proof}
Let $\{P_v,T_f\}$ be a Cuntz-Krieger $\Lambda$-family. Then $T_fP_{s(f)} = T_fT_f^*T_f = T_f,$ where the first equality follows from \ref{CK3} and the second is because $T_f$ is a partial isometry.

Now we will show $P_{r(f)}T_f = T_f$. Let $d(f) = \mathbf{e}_i$. By \ref{CK4} and the definition of partial isometry, we know \[P_{r(f)}T_f = \left(\sum\limits_{e\in r(f)\Lambda^{\mathbf{e}_i}}T_eT_e^*\right)T_f = \left(\sum\limits_{e\in r(f)\Lambda^{\mathbf{e}_i}}T_eT_e^*\right)(T_fT_f^*T_f).\] 
Next, observe that $T_fT_f^*$ will appear as a term in the sum. Since $P_{r(f)}$ is a projection, and each term in the sum is a projection, the terms in the sum must be orthogonal projections. Therefore, $T_eT_e^*T_fT_f^* = 0$ unless $e= f$. By substitution we obtain $\left(\sum\limits_{e\in r(f)\Lambda^{\mathbf{e}_i}}T_eT_e^*\right)(T_fT_f^*T_f) = T_fT_f^*T_fT_f^*T_f$, which is equal to $T_f$ because $T_f$ is a partial isometry.
\end{proof}

\subsection{Kumjian-Pask algebras}
Kumjian-Pask algebras are an algebraic analogue of higher rank graph $C^*$-algebras. We provide a brief overview of Kumjian-Pask algebras here. For a more detailed introduction, we refer the reader to \cite{KPAlgebras}. 

\begin{defn} \label{ghostpath} \cite[Section 3]{KPAlgebras}
Let $\Lambda$ be a $k$-graph and $\Lambda^{\neq \mathbf{0}} := \{\lambda \in \Lambda : d(\lambda) \neq \mathbf{0}\}$. For each $\lambda \in \Lambda^{\neq \mathbf{0}}$, there exists a \textit{ghost path}, denoted $\lambda^*$. If $v \in \Lambda^0$, $v^* = v$. For $\lambda \in \Lambda^{\neq \mathbf{0}}$, we set $d(\lambda^*) = -d(\lambda)$, $r(\lambda^*) = s(\lambda)$, and $s(\lambda^*) = r(\lambda)$. For the set of ghost paths, we write $G(\Lambda)$. For composition on $G(\Lambda)$, we set $\lambda^*\mu^* = (\mu\lambda)^*$ where $\lambda, \mu \in \Lambda^{\neq \mathbf{0}}$ and $r(\mu^*) = s(\lambda^*)$. The factorization rules of $\Lambda$ induce similar factorization rules on $G(\Lambda)$.
\end{defn}

\begin{defn}\label{KPfamily} \cite[Definition 3.1]{KPAlgebras}
Let $\Lambda$ be a row-finite, source-free $k$-graph, and let $R$ be a commutative ring with $1$. A \textit{Kumjian-Pask $\Lambda$-family} $(P,T)$ in an $R$-algebra $A$ consists of two functions $P: \Lambda^0 \to A$ and $T: \Lambda^{\neq \mathbf{0}} \cup G(\Lambda^{\neq \mathbf{0}}) \to A$ such that:

\begin{enumerate}
\myitem{(KP1)}\label{KP1} $\{P_v: v \in \Lambda^0\}$ is a family of mutually orthogonal idempotents,

\myitem{(KP2)}\label{KP2} for all $\lambda, \mu \in \Lambda^{\neq \mathbf{0}}$ with $r(\mu) = s(\lambda)$, we have
\begin{align*}
& T_{\lambda}T_{\mu}= T_{\lambda\mu}, \quad  T_{\mu^*}T_{\lambda^*}= T_{(\lambda\mu)^*}, \quad 
 P_{r(\lambda)}T_{\lambda} = T_{\lambda} = T_{\lambda}P_{s(\lambda)}, \\
 & P_{s(\lambda)}T_{\lambda^*} = T_{\lambda^*} = T_{\lambda^*}P_{r(\lambda)},
\end{align*}
\myitem{(KP3)}\label{KP3} for all $\lambda, \mu \in \Lambda^{\neq \mathbf{0}}$ with $d(\lambda) = d(\mu)$, we have $T_{\lambda^*}T_{\mu} = \delta_{\lambda, \mu}P_{s(\lambda)}$
\myitem{(KP4)}\label{KP4} for all $v \in \Lambda^0$ and all $\mathbf{n} \in \NN^k \setminus \{\mathbf{0}\}$, we have $P_v = \sum_{\lambda \in v\Lambda^{\mathbf{n}}}T_{\lambda}T_{\lambda^*}.$
\end{enumerate}

While we have distinguished the vertex idempotents to emphasize there is only one generator for each path of degree $\mathbf{0}$, while paths of nonzero degree have two, it is convenient when writing formulas to allow $T_v := P_v$ and $T_{v^*} : = P_v$.

\end{defn}

The following lemma asserts that it suffices to define a Kumjian-Pask family only on vertices and edges. 

\begin{lem}\label{lem-KPprime}
Let $\Lambda$ be a source-free, row-finite $k$-graph and let $R$ be a commutative ring with $1$.  Suppose $\{ P_{v} , T_e, T_{e^*} : v \in \Lambda^0 , e \in \Lambda^1 \}$ is a collection of elements in an $R$-algebra $A$ such that 
\begin{enumerate}
    \myitem{(KP1')}\label{KP1'} for all $v, w \in \Lambda^0$, $P_v P_w = \delta_{v,w} P_v$;
    \myitem{(KP2')}\label{KP2'} For all $a,x,b,y \in \Lambda^1$ such that $ax \sim by$, 
    \begin{align*}
        P_{r(a)} T_a &= T_a P_{s(a)} =T_a & P_{s(a)} T_{a^*} &= T_{a^*} P_{r(a)} = T_{a^*} 
    \end{align*}
    and
    \begin{align*}
        T_{a}T_x &= T_b T_y & T_{x^*} T_{a^*} &= T_{y^*} T_{b^*};
    \end{align*}

    \myitem{(KP3')}\label{KP3'} For all $e, f \in \Lambda^1$, $T_{e^*} T_f = \delta_{e,f} P_{s(e)}$; and 

    \myitem{(KP4')}\label{KP4'} For all $v \in \Lambda^0$ and for all $1 \leq i \leq k$, 
    \[P_v = \sum_{ e \in  v\Lambda^{\mathbf{e}_i} } T_{e} T_{e^*}.\]
\end{enumerate}
Set $T_\mu = T_{\mu_n} \cdots T_{\mu_1}$ and $T_{\mu^*} = T_{\mu_1^*} \cdots T_{\mu_n^*}$ for all $\mu = \mu_n \cdots \mu_1 \in \Lambda$ where $\mu_i \in \Lambda^1$ for each $i$.  Then $\{ P_v , T_\mu : v \in \Lambda^0 , \mu \in \Lambda\}$ is a Kumjian-Pask $\Lambda$-family in $A$.
\end{lem}

\begin{proof}
    The relations \ref{KP1} - \ref{KP3} can easily be verified. We prove \ref{KP4} by induction. Fix $v \in \Lambda^0$ and $\mathbf{n} \in \mathbb{N}^k \setminus \{0\}$. If $|\mathbf{n}|=1$, then \ref{KP4} follows immediately. Assume \ref{KP4} holds for $|\mathbf{n}| <N$, and consider  $|\mathbf{n}| = N$. Then there exists $i$ such that $\mathbf{e}_i \leq \mathbf{n}$. For every $\lambda \in v\Lambda^{\mathbf{n}}$, we can write $\lambda = \lambda_1 \tilde{\lambda}$, where $d(\lambda_1) = \mathbf{e}_i$ and $d(\tilde{\lambda}) = \mathbf{n}-\mathbf{e}_i$. Then
    \begin{align*}
        \sum_{\lambda \in v\Lambda^{\mathbf{n}}} T_\lambda T_{\lambda^*} &= \sum_{\lambda_1 \in v\Lambda^{\mathbf{e}_i}} \sum_{\tilde{\lambda} \in s(\lambda_1)\Lambda^{\mathbf{n} - \mathbf{e}_i} } T_{\lambda_1}T_{ \tilde{\lambda}} T_{\tilde{\lambda}^*}T_{\lambda_1^*} \\
        &=\sum_{\lambda_1 \in v\Lambda^{\mathbf{e}_i}} T_{\lambda_1} \left(\sum_{\tilde{\lambda} \in s(\lambda_1)\Lambda^{\mathbf{n} - \mathbf{e}_i} } T_{ \tilde{\lambda}} T_{\tilde{\lambda}^*}\right) T_{\lambda_1^*} \\
        &= \sum_{\lambda_1 \in v\Lambda^{\mathbf{e}_i}} T_{\lambda_1} P_{s(\lambda_1)}T_{\lambda_1^*} \quad \text{by induction hypothesis} \\
        &= \sum_{\lambda_1 \in v\Lambda^{\mathbf{e}_i}} T_{\lambda_1} T_{\lambda_1^*} \quad \text{by \ref{KP2'}} \\
        &= P_v \quad \text{by \ref{KP4'}} \qedhere
    \end{align*} \end{proof}

\begin{thm}\label{KPHom}\cite[Theorem 3.4]{KPAlgebras}
Let $\Lambda$ be a row-finite, source-free $k$-graph, and let $R$ be a commutative ring with $1$. Then there is an $R$-algebra $\mathrm{KP}_R(\Lambda)$ generated by a Kumjian-Pask $\Lambda$-family $(p,t)$, such that, whenever $(P,T)$ is a Kumjian-Pask $\Lambda$-family in an $R$-algebra $A$, there is a unique $R$-algebra homomorphism $\pi_{P,T}: \mathrm{KP}_R(\Lambda) \to A$ such that
\begin{equation*}
\pi_{P,T}(p_v) = P_v, \quad \pi_{P,T}(t_{\lambda}) = T_{\lambda}, \quad \pi_{P,T}(t_{\mu^*}) = T_{\mu^*}
\end{equation*}
for $v \in \Lambda^0$ and $\lambda, \mu \in \Lambda^{\neq \mathbf{0}}$. There is a $\mathbb{Z}^k$-grading on $\mathrm{KP}_R(\Lambda)$ satisfying
\begin{equation*}
\mathrm{KP}_R(\Lambda)_{\mathbf{n}} = span_R\{t_{\lambda}t_{\mu^*} : \lambda, \mu \in \Lambda \text{ and } d(\lambda) - d(\mu) = \mathbf{n}\},
\end{equation*}
and we have $rp_v \neq 0$ for $v \in \Lambda^0$ and $r \in R\setminus \{0\}$.
\end{thm}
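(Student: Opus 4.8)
The plan is to realize $\mathrm{KP}_R(\Lambda)$ as a universal object in the standard way and then extract the grading and the nondegeneracy statement. First I would let $F$ be the free (non-unital) associative $R$-algebra on the symbol set $\{p_v : v\in\Lambda^0\}\cup\{t_\lambda : \lambda\in\Lambda^{\neq\mathbf 0}\}\cup\{t_{\mu^*} : \mu\in\Lambda^{\neq\mathbf 0}\}$, let $I\subseteq F$ be the two-sided ideal generated by the elements that encode the Kumjian-Pask relations \ref{KP1}--\ref{KP4}, and set $\mathrm{KP}_R(\Lambda)=F/I$ with $(p,t)$ the images of the generators. By construction $(p,t)$ is a Kumjian-Pask $\Lambda$-family. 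The universal property is then formal: given a Kumjian-Pask family $(P,T)$ in an $R$-algebra $A$, the universal property of the free algebra produces a unique $R$-algebra homomorphism $\widetilde\pi:F\to A$ sending each generator to the corresponding element of $(P,T)$; since $(P,T)$ satisfies \ref{KP1}--\ref{KP4} we have $\widetilde\pi(I)=0$, so $\widetilde\pi$ descends to the desired $\pi_{P,T}:\mathrm{KP}_R(\Lambda)\to A$, and uniqueness holds because the images of the generators are prescribed and they generate.

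For the $\mathbb{Z}^k$-grading I would first grade $F$ by declaring $p_v$ homogeneous of degree $\mathbf 0$, $t_\lambda$ of degree $d(\lambda)$, and $t_{\mu^*}$ of degree $-d(\mu)$. Each generator of $I$ is $\mathbb{Z}^k$-homogeneous---this is precisely the content of \ref{KP2}--\ref{KP4}---so the grading descends to $\mathrm{KP}_R(\Lambda)$. The substantive point is the spanning claim $\mathrm{KP}_R(\Lambda)_{\mathbf n}=\operatorname{span}_R\{t_\lambda t_{\mu^*}:d(\lambda)-d(\mu)=\mathbf n\}$, for which the engine is the reduction identity
\[
t_{\lambda^*}t_\mu=\sum_{(\alpha,\beta)\in\Lambda^{\min}(\lambda,\mu)}t_\alpha t_{\beta^*},
\]
where $\Lambda^{\min}(\lambda,\mu)$ denotes the minimal common extensions, i.e. the pairs $(\alpha,\beta)$ with $\lambda\alpha=\mu\beta$ and $d(\lambda\alpha)=d(\lambda)\vee d(\mu)$ (coordinatewise maximum). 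I would prove this by inserting the resolution $P_{r(\lambda)}=\sum_{\gamma\in r(\lambda)\Lambda^{\mathbf p}}t_\gamma t_{\gamma^*}$ with $\mathbf p=d(\lambda)\vee d(\mu)$ (valid by \ref{KP4}) between $t_{\lambda^*}$ and $t_\mu$, and then collapsing each summand using \ref{KP3} together with the factorization rule, which kills every $\gamma$ except those having $\lambda$ and $\mu$ as their initial segments of degrees $d(\lambda)$ and $d(\mu)$. Iterating this identity lets any word in the generators be rewritten in the normal form $t_\lambda t_{\mu^*}$ with $s(\lambda)=s(\mu)$, which yields the spanning set and hence identifies the homogeneous components.

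Finally, for the statement $rp_v\neq0$ when $r\in R\setminus\{0\}$, I would build an explicit Kumjian-Pask family on the free $R$-module $W$ with basis the infinite paths $\Lambda^\infty$: let $P_v$ be the projection onto those $x$ with $r(x)=v$, let $T_\mu x=\mu x$ when $s(\mu)=r(x)$ (and $0$ otherwise), and let $T_{\mu^*}x=\sigma^{d(\mu)}(x)$ (the shift) when $\mu$ is the initial segment of $x$ of degree $d(\mu)$ (and $0$ otherwise). Here source-freeness guarantees $v\Lambda^\infty\neq\emptyset$ for every $v$, row-finiteness guarantees the sums in \ref{KP4} are finite, and \ref{KP4} holds because every infinite path has a unique initial segment of each degree $\mathbf n$. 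Composing the induced $\pi_{P,T}$ with evaluation at a basis vector $x\in v\Lambda^\infty$ sends $rp_v$ to $rx\neq0$ in $W$, forcing $rp_v\neq0$. The main obstacle is the reduction identity and the attendant normal-form bookkeeping: getting the minimal common extensions to interact correctly with \ref{KP2}--\ref{KP4} is where the $k$-graph factorization rule does all the real work, while the universal property is purely formal and the infinite-path representation is a direct verification relying on the row-finite and source-free hypotheses.
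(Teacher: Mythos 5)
This theorem is quoted from \cite[Theorem 3.4]{KPAlgebras} and the paper gives no proof of it, so there is nothing internal to compare against; measured against the original source, your argument is correct and follows essentially the same route: the free-algebra-modulo-relations construction of the universal object, the grading descending because the ideal is generated by homogeneous elements, the reduction of words to the normal form $t_\lambda t_{\mu^*}$ via the identity $t_{\lambda^*}t_\mu=\sum_{(\alpha,\beta)\in\Lambda^{\min}(\lambda,\mu)}t_\alpha t_{\beta^*}$, and the infinite-path-space representation (using row-finiteness for \ref{KP4} and source-freeness for $v\Lambda^\infty\neq\emptyset$) to get $rp_v\neq 0$. No gaps worth flagging.
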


We call $\mathrm{KP}_R(\Lambda)$ the \textit{Kumjian-Pask algebra} of $\Lambda$ and $(p,t)$ the \textit{universal Kumjian-Pask $\Lambda$-family}. Whenever possible, we will use lowercase letters to denote the universal Kumjian-Pask $\Lambda$-family.

\section{LiMaR-split for higher rank graphs} \label{sec: limar-split}

In this section, we describe how to LiMaR-split a $k$-graph $\Lambda$ in a degree $B$, and we prove that the result is also a $k$-graph. We begin by introducing a set $S_w$ of vertices that will get split and a function $n$ which determines how many copies there will be of edges and vertices in the LiMaR-split graph.
\begin{defn}[$n$ function] \label{def: nfunction}
Let $(\Lambda, d)$ be a $k$-graph and $G=(\Lambda^{0},\Lambda^1,r,s)$ its $1$-skeleton. Let the color set $C$ be the set of canonical basis elements of $\NN^{k}$. Let $B\in C$, which we call the \textit{degree of the LiMaR-Split}, and let $A=C\setminus \{B\}$. Fix $w\in\Lambda^{0}$ such that $|s_B^{-1}(w)|\geq 2$; that is, at least two edges are coming out of the vertex $w$ of color $B$. Let $S_w$ be the minimal subset of $\Lambda^0$ satisfying the following conditions:
\begin{enumerate}
    \item \label{S_w1} $w \in S_w$
    \item \label{S_w2} If $x \in S_w$ and there exists $e \in \Lambda^A$ such that $s(e)=x$ and $|s_B^{-1}(r(e))|\geq 2$, then $r(e) \in S_w$.
\end{enumerate}
We define the function $n:\Lambda^0\rightarrow \NN$ by
\begin{gather*}
 n(v)=
\begin{cases}
    |s_B^{-1}(v)|, & v \in S_w \\
    1,  & v \not\in S_w 
\end{cases}
\end{gather*}
\end{defn}

\begin{rmk}
    By definition of $S_w$, we have $n(v) \geq 2$ if and only if $v \in S_w$. Moreover, if $v \in S_w$, then $n(v)$ equals the number of edges of color $B$ coming out of $v$.
\end{rmk}

\begin{defn}[$1$-skeleton of LiMaR-split]\label{def: 1-skeletonlimarsplit}
Let $(\Lambda, d)$ be a $k$-graph and $G=(\Lambda^{0},\Lambda^1,r,s)$ its $1$-skeleton. Fix $B \in C$ and $w \in \Lambda^0$ as in Definition \ref{def: nfunction}. We define the \textit{$1$-skeleton of the LiMaR-split of $\Lambda$ (in degree $B$)}, denoted $G_L=\{\Gamma^{0},\Gamma^1,r_{\Gamma},s_{\Gamma}\}$, as follows. 

For every $v\in\Lambda^{0}$ with $s_B^{-1}(v) \neq \emptyset$, partition $s^{-1}_B(v)$ into $n(v)$ nonempty sets $\mathcal{E}^{v}_1 \sqcup \mathcal{E}^{v}_2 \sqcup \ldots \sqcup\mathcal{E}^{v}_{n(v)}$. 

Define
\begin{align*}
\Gamma^{0}&=\{v^i : v \in \Lambda^{0},1\leq i\leq n(v)\}\\
\Gamma^{1}&=\{e^{i} : e \in \Lambda^{1}, 1 \leq i \leq n(r(e))\}.
\end{align*}

For $e^i \in \Gamma^1$, we define the range and source maps as follows. The source map is defined piecewise in three cases. The first case is used when $d(e)= B$. The second case is used when $d(e)\in A$ and $|s^{-1}_{B}(r(e))|\geq 1$. The last condition accounts for the case where $d(e) \in A$ and $|s^{-1}_{B}(r(e))|=0$:
\begin{align}
\label{rangemap} r_\Gamma(e^{i}) &= r(e)^i\\
\label{sourcemap} s_{\Gamma}(e^i) &= \begin{cases}
 s(e)^j & \text{if } e\in\mathcal{E}^{s(e)}_j;\\
 s(e)^j & \text{if } fe \in\Lambda \text{, } d(f)\neq d(e) \text{, } f\in\mathcal{E}_i^{s(f)}\text{, }fe\sim ac, \text{ and }c\in\mathcal{E}_j^{s(c)};\\
s(e)^1  & \text{else}.
\end{cases}
\end{align}
\end{defn}

\begin{lemma}
    The source and range maps $s_\Gamma$ and $r_\Gamma$ in Definition \ref{def: 1-skeletonlimarsplit} are well-defined.
\end{lemma}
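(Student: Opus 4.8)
The plan is to treat the two maps separately and to notice that the range map requires almost nothing, so that all the work concentrates on a single branch of the source map. For $r_\Gamma(e^i) = r(e)^i$ there are no choices: the pair $(e,i)$ determines the symbol $r(e)^i$, and since $e^i \in \Gamma^1$ means exactly $1 \leq i \leq n(r(e))$, the output $r(e)^i$ is a genuine vertex of $\Gamma^0$. Hence $r_\Gamma$ is well-defined immediately.

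For the source map I would first record that the three branches of the piecewise formula \eqref{sourcemap} are mutually exclusive and exhaustive. Since $\mathcal{E}^{s(e)}_\bullet$ partitions $s_B^{-1}(s(e))$, the condition $e \in \mathcal{E}^{s(e)}_j$ holds iff $d(e) = B$, so the first branch applies precisely when $d(e) = B$; the remaining two branches require $d(e) \in A$ and are separated by whether $|s_B^{-1}(r(e))| \geq 1$. In branch 1 the index $j$ is unique because the blocks of a partition are disjoint, and the answer does not depend on $i$; in branch 3 the answer $s(e)^1$ is unambiguous. In both cases $1 \leq j \leq n(s(e))$ (using $n \geq 1$), so the output is a legitimate vertex.

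The only real content is branch 2, and this is where I expect the main obstacle. Here, for a fixed $e^i$ with $d(e) \in A$, one chooses a $B$-colored edge $f \in \mathcal{E}_i^{r(e)}$ (such $f$ exists because the block is nonempty and $s(f) = r(e)$ makes $fe$ composable), forms the unique commuted path $ac$ with $fe \sim ac$ and $d(c) = B$ — unique by completeness \ref{KG2} — and reads $j$ off from $c \in \mathcal{E}_j^{s(e)}$. I must show $j$ is independent of the choice of $f$. I would split on whether $s(e) \in S_w$. If $s(e) \notin S_w$ then $n(s(e)) = 1$, the partition of $s_B^{-1}(s(e))$ is trivial, and since $s(c) = s(fe) = s(e)$ with $d(c) = B$ we get $c \in \mathcal{E}_1^{s(e)}$, so $j = 1$ for every $f$. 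If $s(e) \in S_w$ then every block $\mathcal{E}_j^{s(e)}$ is a singleton, so it suffices to show the chosen $f$ — hence the unique $c$ — is forced: when $r(e) \in S_w$ the blocks $\mathcal{E}_i^{r(e)}$ are singletons and $i$ already pins down $f$; when $r(e) \notin S_w$ I would apply condition \ref{S_w2} in the contrapositive, noting that $s(e) \in S_w$ together with the $A$-edge $e$ (with source $s(e)$) would force $r(e) \in S_w$ as soon as $|s_B^{-1}(r(e))| \geq 2$, so the branch-2 hypothesis $|s_B^{-1}(r(e))| \geq 1$ collapses to $|s_B^{-1}(r(e))| = 1$ and $f$ is again unique.

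The genuine difficulty is exactly the sub-case $s(e) \in S_w$, $r(e) \notin S_w$: there well-definedness is not a consequence of the partition being trivial but of the closure property built into the minimal set $S_w$, so the argument must lean on \ref{S_w2} rather than on any feature of the commutation itself. Everything else — disjointness of partition blocks, uniqueness of the commuted path from \ref{KG2}, and checking that all superscripts lie in the correct range — is routine bookkeeping.
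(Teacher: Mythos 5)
Your proof is correct and follows essentially the same route as the paper's: both reduce everything to branch two of the source map and settle it using the facts that partition blocks at $S_w$-vertices are singletons, that $n=1$ off $S_w$, and the contrapositive of condition \ref{S_w2} to rule out the problematic mixed case. The only difference is cosmetic — you case-split on whether $s(e)\in S_w$, while the paper splits on whether $|\mathcal{E}_i^{r(e)}|=1$ or $\geq 2$ — but the underlying ingredients and the key use of \ref{S_w2} are identical.
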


\begin{proof}
    Let $e \in \Lambda^1$ with $r(e) = v$, and fix $1 \leq i \leq n(v)$. Since there are $n(v)$ copies of $v$ in $\Gamma^0$ and $n(v)$ copies of $e$ in $\Gamma^1$, then $r_\Gamma(e^i)$ is well-defined. 
    
    Now we check that $s_\Gamma(e^i)$ is well-defined. If $d(e) = B$, then $e \in \mathcal{E}_j^{s(e)}$ for some $j$ such that $1 \leq j \leq n(s(e))$. So $s_\Gamma(e^i)$ is well-defined. If $d(e) \neq B$, then we have two cases: $v$ is not a degree $B$ sink or $v$ is a degree $B$ sink. If $v$ is not a degree $B$ sink, then $s_B^{-1}(v)$ is partitioned into $n(v)$ nonempty sets $\mathcal{E}^{v}_1 \sqcup \mathcal{E}^{v}_2 \sqcup \ldots \sqcup\mathcal{E}^{v}_{n(v)}$. If $|\mathcal{E}_i^v|=1$, let $\{f\} = \mathcal{E}_i^v$. Then there exists a unique path $ac \in \Lambda$ such that $fe \sim ac$ and $c \in s_B^{-1}(s(e))$. Since $|s_B^{-1}(s(e))| \neq \emptyset$, then $s_B^{-1}(s(e))$ is partitioned into $n(s(e))$ nonempty sets, exactly one of which contains $c$. Therefore $s_\Gamma(e^i)$ is well-defined. On the other hand, if $|\mathcal{E}_i^v| \geq 2$, then there exist $f, f' \in \mathcal{E}_i^v$. This can only happen if $n(v)=1$, which means $v \not\in S_w$. Since $\Lambda$ is a $k$-graph, there exist unique paths $ac$ and $a'c'$ such that $fe \sim ac$ and $f'e\sim a'c'$ and $c, c' \in s_B^{-1}(s(e))$. Since $|s_B^{-1}(s(e))| \neq \emptyset$, then $s_B^{-1}(s(e))$ is partitioned into $n(s(e))$ nonempty sets.  However, since $v=r(e) \not\in S_w$, then $s(e) \not\in S_w$ by \ref{S_w2}, so $n(s(e))=1$, which implies both $c, c' \in \mathcal{E}_1^{s(e)}$, so $s_\Gamma(e^i)$ is well-defined. Lastly, if $v$ is a degree $B$ sink and $d(e) \neq B$, then only the third condition in Equation \eqref{sourcemap} applies, and $s_\Gamma(e^i)$ is well-defined. 
\end{proof}

\begin{defn}[Parent Function]\label{def: parent function}\cite[Definition 3.5.1]{efgggp} 
    Let $G$ be the $1$-skeleton of a $k$-graph $(\Lambda,d)$, and let $G_L$ be the $1$-skeleton of the LiMaR-split of $\Lambda$, as defined in Definition \ref{def: 1-skeletonlimarsplit}. We define a morphism    $par:G_L\rightarrow G$ by $par(e^i)=e$ and $par(v^i)=v$ for any $i\in\NN$  and $e^i\in\Gamma^1$ or $v^i\in\Gamma^0$. For any $e,f\in \Gamma$ with $par(e)=par(f)$, we say that $e$ and $f$ are \textit{copies of $par(e)$} in $\Gamma$.
\end{defn}

\begin{defn}[Degree Map $d_\Gamma$]\label{def: degreemaplimar}
Let $G$ be the $1$-skeleton of a $k$-graph $(\Lambda,d)$, and let $G_L$ be the $1$-skeleton of the LiMaR-split of $\Lambda$. For every $e \in \Gamma^1$, we define the degree map $d_\Gamma: \Gamma^1 \to \mathbb{N}^k$ by $d_{\Gamma}(e)$ to be $d(par(e))$.
\end{defn}

\begin{defn}[Equivalence Relation $\sim_L$]\label{def: equivrelationlimar}
Let $(\Lambda,d)$ be a $k$-graph and $G_L$ be the $1$-skeleton of the LiMaR-split of $\Lambda$. We define an equivalence relation $\sim_{L}$ on $G_L$ by $\lambda \sim_{L} \mu$ if $par(\lambda)\sim par(\mu)$ and $r_{\Gamma}(\lambda)=r_{\Gamma}(\mu)$, $s_{\Gamma}(\lambda)=s_{\Gamma}(\mu)$, for $\lambda,\mu\in G_L$.
\end{defn}

\begin{defn}[LiMaR-split of $\Lambda$]
    Let $(\Lambda,d)$ be a $k$-graph and $G_L$ be the $1$-skeleton of the LiMaR-split of $\Lambda$, as defined in Definition \ref{def: 1-skeletonlimarsplit}. We define $\Gamma=G_L/\sim_L$ together with the degree map $d_\Gamma$ to be the \textit{LiMaR-split of $\Lambda$ (in degree $B$)}.
\end{defn}

Before proving that $(\Gamma, d_\Gamma)$ is indeed a $k$-graph, we provide  a couple of examples of LiMaR-split applied to a $2$-graph.

\begin{example} \label{ex:princessleiaplus}
Let $\Lambda$ be the $2$-graph with $1$-skeleton and factorization rules below:

\begin{tikzpicture}[scale = 0.7]
\node[inner sep=1pt, outer sep = 3pt, circle, fill = black] (v) [label=above:{\small$v$}] at (-3,0) {};
\node[inner sep=1pt, outer sep = 3pt,circle, fill = black] (w) [label=below:{\small$x$}] at (0,0) {};
\node[inner sep=1pt, outer sep = 3pt,circle, fill = black] (x) [label=above:{\small$z$}] at (3,0) {};
\node[inner sep=1pt, outer sep = 3pt,circle, fill = black] (y) [label=left:{\small$y$}] at (0,2) {};

\draw [red,dashed, -latex] (v) edge[out = -50, in = -130] node[midway, below] {$c$} (x);
\draw [red,dashed, -latex] (v) edge[loop left, out = 120, in = -120, distance = 4cm] node {$\beta$} (v);
\draw[red,dashed, -latex] (v)  edge[out = -30, in = -150] node [midway, above] {$b$} (w);
\draw[red,dashed, -latex] (w) edge [out = -30,in = -150] node [midway, above] {$f$} (x);
\draw[red, dashed, -latex] (x) edge [loop right, out = -120, in = 120, distance = -4cm] node {$m$} (x);
\draw[red,dashed, -latex] (w) edge [out = 50,in = -50] node [midway, right] {$e$} (y);

\draw [blue, -latex] (v) edge[loop left, out = 120, in = -120,distance = 2cm] node {$\alpha$} (v);
\draw[blue,-latex] (v) edge [out = -90, in = -90] node [midway, below] {$i$} (x);
\draw[blue, -latex] (v) edge [out = 25, in = 150] node [midway, above] {$h$} (w);
\draw[blue, -latex] (w) edge [out = 25, in = 160] node [midway, above] {$\ell$} (x);
\draw[blue, -latex] (x) edge[loop right, out = 60, in = -60,distance = 2cm] node{$n$} (x);
\draw[blue, -latex] (w) edge [out = -225, in = -130] node [midway, left] {$k$} (y);

\node at (-3.5,3) {$\Lambda:$};
\node at (9,3) {\small 1. ${\color{red}\beta}{\color{blue}\alpha}={\color{blue}\alpha}{\color{red}\beta}$};
\node at (9,2.5) {\small 2. ${\color{red}b}{\color{blue}\alpha}={\color{blue}h}{\color{red}\beta}$};
\node at (9,2) {\small 3. ${\color{red}c}{\color{blue}\alpha}={\color{blue}i}{\color{red}\beta}$};
\node at (9,1.5) {\small 4. ${\color{red}e}{\color{blue}h}={\color{blue}k}{\color{red}b}$};
\node at (9,1) {\small 5. ${\color{red}f}{\color{blue}h}={\color{blue}\ell}{\color{red}{b}}$};
\node at (9,0.5) {\small 6. ${\color{red}m}{\color{blue}\ell}={\color{blue}n}{\color{red}f}$};
\node at (9,0) {\small 7. ${\color{red}m}{\color{blue}n}={\color{blue}n}{\color{red}m}$};
\node at (9,-0.5) {\small 8. ${\color{red}m}{\color{blue}i}={\color{blue}n}{\color{red}c}$};
\end{tikzpicture}

Let $B$ refer to the blue edges, and fix the vertex $v$. Then $S_v = \{v,x\}$ so $n(v)=3,n(x)=2,n(y)=1,n(z)=1$. The $1$-skeleton of the LiMaR-split of $\Lambda$ derived from the partitions $\mathcal{E}^v_1=\{\alpha\}$, $\mathcal{E}^v_2=\{h\}$, $\mathcal{E}^v_3=\{i\}$, $\mathcal{E}^x_1=\{k\}$, $\mathcal{E}^x_2=\{\ell\}$, and $\mathcal{E}^z_1=\{n\}$ is a $2$-graph when equipped with a degree map as defined in Definition \ref{def: degreemaplimar} and the following factorization rules:

\begin{tikzpicture}[scale = 0.7]
\node[inner sep=1pt, outer sep = 3pt, circle, fill = black] (v1) [label=above:{\small$v^1$}] at (-1,1) {};
\node[inner sep=1pt, outer sep = 3pt, circle, fill = black] (v2) [label=left:{\small$v^2$}] at (-1,-1) {};
\node[inner sep=1pt, outer sep = 3pt, circle, fill = black] (v3) [label=above:{\small$v^3$}] at (-1,-3) {};
\node[inner sep=1pt, outer sep = 3pt,circle, fill = black] (x1) [label=above:{\small$x^1$}] at (3,0.5) {};
\node[inner sep=1pt, outer sep = 3pt,circle, fill = black] (x2) [label=above:{\small$x^2$}] at (3,-4) {};
\node[inner sep=1pt, outer sep = 3pt,circle, fill = black] (z1) [label=right:{\small$z^1$}] at (5,-5) {};
\node[inner sep=1pt, outer sep = 3pt,circle, fill = black] (y1) [label=left:{\small$y^1$}] at (5,0.5) {};

\node at (-3,3) {$\Gamma:$};

\node at (9,3) {\small 1. ${\color{red}\beta^1}{\color{blue}\alpha^1}={\color{blue}\alpha^1}{\color{red}\beta^1}$};
\node at (9,2.5) {\small 2. ${\color{red}\beta^2}{\color{blue}\alpha^1}={\color{blue}\alpha^2}{\color{red}\beta^1}$};
\node at (9,2) {\small 3. ${\color{red}\beta^3}{\color{blue}\alpha^1}={\color{blue}\alpha^3}{\color{red}\beta^1}$};
\node at (9,1.5) {\small 4. ${\color{red}b^1}{\color{blue}\alpha^2}={\color{blue}h^1}{\color{red}\beta^2}$};
\node at (9,1) {\small 5. ${\color{red}b^2}{\color{blue}\alpha^2}={\color{blue}h^2}{\color{red}\beta^2}$};
\node at (9,0.5) {\small 6. ${\color{red}c^1}{\color{blue}\alpha^3}={\color{blue}i^1}{\color{red}\beta^3}$};
\node at (9,0) {\small 7. ${\color{red}e^1}{\color{blue}h^1}={\color{blue}k^1}{\color{red}b^1}$};
\node at (9,-0.5) {\small 8. ${\color{red}f^1}{\color{blue}h^2}={\color{blue}\ell^1}{\color{red}b^2}$};
\node at (9,-1) {\small 9. ${\color{red}m^1}{\color{blue}\ell^1}={\color{blue}n^1}{\color{red}f^1}$};
\node at (9,-1.5) {\small 10. ${\color{red}m^1}{\color{blue}n^1}={\color{blue}n^1}{\color{red}m^1}$};
\node at (9,-2) {\small 11. ${\color{red}m^1}{\color{blue}i^1}={\color{blue}n^1}{\color{red}c^1}$};

\draw [red,dashed, -latex] (v3) edge[out = -60, in = 210, distance = 2cm] node[midway, below] {$c^1$} (z1);
\draw [red,dashed, -latex] (v1) edge[loop left, out = 65, in = -210, distance = 2cm] node {$\beta^1$} (v1);
\draw [red,dashed, -latex] (v1) edge[loop left, out = 230, in = 130] node {$\beta^2$} (v2);
\draw [red,dashed, -latex] (v1) edge[loop left, out = -180, in = 160, distance = 3cm] node {$\beta^3$} (v3);
\draw[red,dashed, -latex] (v2)  edge[out = 0, in = -150] node [midway, below] {$b^1$} (x1);
\draw[red,dashed, -latex] (v2)  edge[out = -30, in = -230] node [midway, below] {$b^2$} (x2);
\draw[red,dashed, -latex] (x2) edge [out = -60,in = -190] node [midway, below] {$f^1$} (z1);
\draw[red, dashed, -latex] (z1) edge [loop right, out = 80, in = 145, distance = -2cm] node {$m^1$} (z1);
\draw[red,dashed, -latex] (x1) edge [out = 50,in = -220] node [midway, above] {$e^1$} (y1);

\draw [blue, -latex] (v1) edge[loop left, out = 40, in = -40,distance = 2cm] node {$\alpha^1$} (v1);
\draw [blue, -latex] (v1) edge[loop left, out = -60, in = 60] node {$\alpha^2$} (v2);
\draw [blue, -latex] (v1) edge[loop left, out = -160, in = 140, distance = 2cm] node {$\alpha^3$} (v3);
\draw[blue,-latex] (v3) edge [out = -90, in = -120] node [midway, below] {$i^1$} (z1);
\draw[blue, -latex] (v2) edge [out = 40, in = 170] node [midway, below] {$h^1$} (x1);
\draw[blue, -latex] (v2) edge [out = -50, in = -200] node [midway, below] {$h^2$} (x2);
\draw[blue, -latex] (x2) edge [out = -10, in = 130] node [midway, above] {$\ell^1$} (z1);
\draw[blue, -latex] (z1) edge[loop right, out = 90, in = 20,distance = 2cm] node{$n^1$} (z1);
\draw[blue, -latex] (x1) edge [out = -50, in = -130] node [midway, below] {$k^1$} (y1);
\end{tikzpicture}

\end{example}

A difference in factorization rules  for $\Lambda$ can result in a different $2$-graph  $\Gamma$.

\begin{example} \label{ex:princessleiaplus2}
    Consider $\Lambda$ with the same $1$-skeleton as depicted above but with the alternate factorization rules:\\

\begin{tikzpicture}[scale = 0.7]
\node[inner sep=1pt, outer sep = 3pt, circle, fill = black] (v) [label=above:{\small$v$}] at (-3,0) {};
\node[inner sep=1pt, outer sep = 3pt,circle, fill = black] (w) [label=below:{\small$x$}] at (0,0) {};
\node[inner sep=1pt, outer sep = 3pt,circle, fill = black] (x) [label=above:{\small$z$}] at (3,0) {};
\node[inner sep=1pt, outer sep = 3pt,circle, fill = black] (y) [label=left:{\small$y$}] at (0,2) {};

\node at (-3.5,3) {$\Lambda:$};

\draw [red,dashed, -latex] (v) edge[out = -50, in = -130] node[midway, below] {$c$} (x);
\draw [red,dashed, -latex] (v) edge[loop left, out = 120, in = -120, distance = 4cm] node {$\beta$} (v);
\draw[red,dashed, -latex] (v)  edge[out = -30, in = -150] node [midway, above] {$b$} (w);
\draw[red,dashed, -latex] (w) edge [out = -30,in = -150] node [midway, above] {$f$} (x);
\draw[red, dashed, -latex] (x) edge [loop right, out = -120, in = 120, distance = -4cm] node {$m$} (x);
\draw[red,dashed, -latex] (w) edge [out = 50,in = -50] node [midway, right] {$e$} (y);

\draw [blue, -latex] (v) edge[loop left, out = 120, in = -120,distance = 2cm] node {$\alpha$} (v);
\draw[blue,-latex] (v) edge [out = -90, in = -90] node [midway, below] {$i$} (x);
\draw[blue, -latex] (v) edge [out = 25, in = 150] node [midway, above] {$h$} (w);
\draw[blue, -latex] (w) edge [out = 25, in = 160] node [midway, above] {$\ell$} (x);
\draw[blue, -latex] (x) edge[loop right, out = 60, in = -60,distance = 2cm] node{$n$} (x);
\draw[blue, -latex] (w) edge [out = -225, in = -130] node [midway, left] {$k$} (y);

\node at (9,3) {\small 1. ${\color{red}\beta}{\color{blue}\alpha}={\color{blue}\alpha}{\color{red}\beta}$};
\node at (9,2.5) {\small 2. ${\color{red}b}{\color{blue}\alpha}={\color{blue}h}{\color{red}\beta}$};
\node at (9,2) {\small 3. ${\color{red}c}{\color{blue}\alpha}={\color{blue}i}{\color{red}\beta}$};
\node at (9,1.5) {\small 4. ${\color{red}e}{\color{blue}h}={\color{blue}k}{\color{red}b}$};
\node at (9,1) {\small 5. ${\color{red}f}{\color{blue}h}={\color{blue}n}{\color{red}{c}}$};
\node at (9,0.5) {\small 6. ${\color{red}m}{\color{blue}\ell}={\color{blue}n}{\color{red}f}$};
\node at (9,0) {\small 7. ${\color{red}m}{\color{blue}n}={\color{blue}n}{\color{red}m}$};
\node at (9,-0.5) {\small 8. ${\color{red}m}{\color{blue}i}={\color{blue}\ell}{\color{red}b}$};
\end{tikzpicture}

These factorization rules together with  the same partitions as above ($\mathcal{E}^v_1=\{\alpha\}$, $\mathcal{E}^v_2=\{h\}$, $\mathcal{E}^v_3=\{i\}$, $\mathcal{E}^x_1=\{k\}$, $\mathcal{E}^x_2=\{\ell\}$, and $\mathcal{E}_1^z = \{n\}$) give rise to the following $2$-graph:

\begin{center}
        \begin{tikzpicture}[scale = 0.7]
\node[inner sep=1pt, outer sep = 3pt, circle, fill = black] (v1) [label=above:{\small$v^1$}] at (-1,1) {};
\node[inner sep=1pt, outer sep = 3pt, circle, fill = black] (v2) [label=left:{\small$v^2$}] at (-1,-1) {};
\node[inner sep=1pt, outer sep = 3pt, circle, fill = black] (v3) [label=above:{\small$v^3$}] at (-1,-3) {};
\node[inner sep=1pt, outer sep = 3pt,circle, fill = black] (x1) [label=above:{\small$x^1$}] at (3,0.5) {};
\node[inner sep=1pt, outer sep = 3pt,circle, fill = black] (x2) [label=above:{\small$x^2$}] at (3,-4) {};
\node[inner sep=1pt, outer sep = 3pt,circle, fill = black] (z1) [label=right:{\small$z^1$}] at (5,-5) {};
\node[inner sep=1pt, outer sep = 3pt,circle, fill = black] (y1) [label=left:{\small$y^1$}] at (5,0.5) {};

\node at (-3,3) {$\Gamma:$};

\node at (9,3) {\small 1. ${\color{red}\beta^1}{\color{blue}\alpha^1}={\color{blue}\alpha^1}{\color{red}\beta^1}$};
\node at (9,2.5) {\small 2. ${\color{red}\beta^2}{\color{blue}\alpha^1}={\color{blue}\alpha^2}{\color{red}\beta^1}$};
\node at (9,2) {\small 3. ${\color{red}\beta^3}{\color{blue}\alpha^1}={\color{blue}\alpha^3}{\color{red}\beta^1}$};
\node at (9,1.5) {\small 4.
${\color{red}b^1}{\color{blue}\alpha^2}={\color{blue}h^1}{\color{red}\beta^2}$};
\node at (9,1) {\small 5.
${\color{red}b^2}{\color{blue}\alpha^3}={\color{blue}h^2}{\color{red}\beta^2}$};
\node at (9,0.5) {\small 6. ${\color{red}c^1}{\color{blue}\alpha^2}={\color{blue}i^1}{\color{red}\beta^3}$};
\node at (9,-0) {\small 7. ${\color{red}e^1}{\color{blue}h^1}={\color{blue}k^1}{\color{red}b^1}$};
\node at (9,-0.5) {\small 8. ${\color{red}f^1}{\color{blue}h^2}={\color{blue}n^1}{\color{red}c^1}$};
\node at (9,-1) {\small 9. ${\color{red}m^1}{\color{blue}\ell^1}={\color{blue}n^1}{\color{red}f^1}$};
\node at (9,-1.5) {\small 10. ${\color{red}m^1}{\color{blue}n^1}={\color{blue}n^1}{\color{red}m^1}$};
\node at (9,-2) {\small 11. ${\color{red}m^1}{\color{blue}i^1}={\color{blue}\ell^1}{\color{red}b^2}$};

\draw [red,dashed, -latex] (v2) edge[out = -20, in = 105, distance = 2cm] node[midway, below] {$c^1$} (z1);
\draw [red,dashed, -latex] (v1) edge[loop left, out = 65, in = -210, distance = 2cm] node {$\beta^1$} (v1);
\draw [red,dashed, -latex] (v1) edge[loop left, out = 240, in = 130] node {$\beta^2$} (v2);
\draw [red,dashed, -latex] (v1) edge[loop left, out = -180, in = 160, distance = 3cm] node {$\beta^3$} (v3);
\draw[red,dashed, -latex] (v2)  edge[out = 0, in = -150] node [midway, below] {$b^1$} (x1);
\draw[red,dashed, -latex] (v3)  edge[out = -50, in = -150] node [midway, below] {$b^2$} (x2);
\draw[red,dashed, -latex] (x2) edge [out = -60,in = -190] node [midway, below] {$f^1$} (z1);
\draw[red, dashed, -latex] (z1) edge [loop right, out = 80, in = 150, distance = -2cm] node {$m^1$} (z1);
\draw[red,dashed, -latex] (x1) edge [out = 50,in = -220] node [midway, above] {$e^1$} (y1);

\draw [blue, -latex] (v1) edge[loop left, out = 40, in = -40,distance = 2cm] node {$\alpha^1$} (v1);
\draw [blue, -latex] (v1) edge[loop left, out = -60, in = 60] node {$\alpha^2$} (v2);
\draw [blue, -latex] (v1) edge[loop left, out = -160, in = 140, distance = 2cm] node {$\alpha^3$} (v3);
\draw[blue,-latex] (v3) edge [out = -90, in = -120] node [midway, below] {$i^1$} (z1);
\draw[blue, -latex] (v2) edge [out = 40, in = 170] node [midway, below] {$h^1$} (x1);
\draw[blue, -latex] (v2) edge [out = -50, in = -200] node [midway, below] {$h^2$} (x2);
\draw[blue, -latex] (x2) edge [out = -10, in = 130] node [midway, above] {$\ell^1$} (z1);
\draw[blue, -latex] (z1) edge[loop right, out = 90, in = 20,distance = 2cm] node{$n^1$} (z1);
\draw[blue, -latex] (x1) edge [out = -50, in = -130] node [midway, below] {$k^1$} (y1);
\end{tikzpicture}
    \end{center}
\end{example}

We will use the following remark and lemmas to prove that, under one additional assumption, the $k$-colored graph $\Gamma =G_L^*/\sim_{L}$ that results from LiMaR-splitting is indeed a $k$-graph. 

\begin{rmk} \label{rmk: distinctrange}
The definition of LiMaR-split guarantees that distinct copies in $\Gamma$ of an edge in $\Lambda$ have distinct ranges.
\end{rmk}

\begin{lemma} \label{lemma: parentpaths}
Suppose $(\Gamma,d_\Gamma)$ is the result of LiMaR-splitting a $k$-graph $(\Lambda,d)$. If $e^if^j$ is a two edge path in $G_L^*$, then $par(e^i)par(f^j)$ is a two edge path in $G^*$ of the same degree.
\end{lemma}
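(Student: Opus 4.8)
The plan is to reduce the statement to the single observation that the parent map from Definition~\ref{def: parent function} is a genuine morphism of directed graphs, i.e.\ that it intertwines the range and source maps of $G_L$ and $G$. Once this is established, both conclusions follow immediately: a morphism of directed graphs carries composable paths to composable paths, and the degree claim is essentially the definition of $d_\Gamma$.

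First I would record the degree bookkeeping, which is the trivial part. By Definition~\ref{def: degreemaplimar} we have $d_\Gamma(e^i) = d(par(e^i)) = d(e)$ and likewise $d_\Gamma(f^j) = d(f)$, so the two-edge path $e^if^j$ and its image $par(e^i)par(f^j) = ef$ both carry degree $d(e) + d(f)$.

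The heart of the argument is to verify that $par$ commutes with the range and source maps, that is $par(r_\Gamma(e^i)) = r(par(e^i))$ and $par(s_\Gamma(e^i)) = s(par(e^i))$ for every $e^i \in \Gamma^1$. The range identity is immediate from \eqref{rangemap}, since $par(r_\Gamma(e^i)) = par(r(e)^i) = r(e) = r(par(e^i))$. For the source identity I would run through the three cases of \eqref{sourcemap}. In each case the value $s_\Gamma(e^i)$ is a copy $s(e)^\ell$ of $s(e)$ — in the first two cases $\ell$ is the factorization index $j$, and in the third case $\ell = 1$ — so that $par(s_\Gamma(e^i)) = s(e) = s(par(e^i))$ regardless of which branch applies. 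The key point is that I never need to identify \emph{which} copy of $s(e)$ is selected; I only need that some copy of $s(e)$ is chosen, which is already guaranteed by the well-definedness of $s_\Gamma$.

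Finally I would assemble the pieces. If $e^if^j$ is a two-edge path in $G_L^*$, then (in the composition convention of the paper) $s_\Gamma(e^i) = r_\Gamma(f^j)$. Applying $par$ and using the two intertwining identities gives $s(par(e^i)) = r(par(f^j))$, i.e.\ $s(e) = r(f)$, so $par(e^i)par(f^j) = ef$ is a composable two-edge path in $G^*$; combined with the degree computation this is exactly the claim. The only thing requiring genuine care — and hence the main obstacle, such as it is — is the case analysis for the source map: one must confirm that the third ``else'' branch and the index chosen in the factorization branches really do produce a copy of $s(e)$ rather than of some other vertex. This is routine given the shape of \eqref{sourcemap}, so the lemma is in the end a formal consequence of $par$ being a graph morphism.
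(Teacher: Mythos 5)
Your proposal is correct and follows essentially the same route as the paper's proof: the paper likewise observes that $r_\Gamma(f^j)$ is a copy of $r(f)$ and $s_\Gamma(e^i)$ is a copy $s(e)^p$ of $s(e)$ (for some $p$, regardless of which branch of the source map applies), so that equality of these copies forces $r(f)=s(e)$, with the degree claim immediate from Definition~\ref{def: degreemaplimar}. Your packaging of this as ``$par$ intertwines range and source'' is just a slightly more systematic phrasing of the same computation.
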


\begin{proof}
Let $e^if^j$ be a two edge path in $G_L^*$, which implies that $r_{\Gamma}(f^j) = s_{\Gamma}(e^i)$. Let $e:=par(e^i)$ and $f:= par(f^j)$. By Definition \ref{def: 1-skeletonlimarsplit}, $r_{\Gamma}(f^j) = r(f)^j$ and $s_{\Gamma}(e^i) = s(e)^p$ for some $p \in \NN^+$. Then, by substitution, $r(f)^j = s(e)^p$, which implies $r(f) = s(e)$. This proves that $ef$ is a path in $G$. Finally, since $d_{\Gamma}(e^i) = d(e)$ and $d_\Gamma(f^j) = d(f)$ by Definition \ref{def: degreemaplimar}, then $ef$ is a two edge path in $G$ with the same degree.
\end{proof}

The following lemma proves \ref{KG2} for paths of length two with one edge of degree $B$.

\begin{lemma} \label{lemma: KG2AB}
Suppose $(\Lambda,d)$ is a $k$-graph and $(\Gamma,d_\Gamma)$ is the LiMaR-split of $\Lambda$ in degree $B$. If $e^if^j$ is a two edge two color path in $\Gamma$, where either $e^i$ or $f^j$ has degree $B$, then there exists a unique two color path $g^ih^l$  in $\Gamma$ where $d_\Gamma(g^i) = d_\Gamma(f^j)$, $d_\Gamma(h^l)=d_\Gamma(e^i)$, $s_{\Gamma}(h^l)=s_\Gamma(f^j)$, $r_{\Gamma}(g^i) = r_{\Gamma}(e^i)$, and $par(e^i)par(f^j) \sim par(g^i)par(h^l)$.
\end{lemma}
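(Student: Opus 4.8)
The plan is to lift the unique commuting square that \ref{KG2} provides in $\Lambda$ up to $\Gamma$, using that $par$ preserves degrees and carries paths to paths (Lemma \ref{lemma: parentpaths}), and then to pin down the superscripts by reading off the source map of Definition \ref{def: 1-skeletonlimarsplit}. First I would set $ef := par(e^i)par(f^j)$; by Lemma \ref{lemma: parentpaths} this is a genuine two-edge path in $G$ with $d(e) = d_\Gamma(e^i)$, $d(f) = d_\Gamma(f^j)$, and since $e^if^j$ is two-colored with one edge of degree $B$, so is $ef$. As $\Lambda$ is a $k$-graph, \ref{KG2} gives a \emph{unique} $gh \in G^2$ with $d(g) = d(f)$, $d(h) = d(e)$ and $ef \sim gh$; because $\sim$ preserves $r$ and $s$, I also get $r(g) = r(e)$ and $s(h) = s(f)$.

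Next I would build the lift. Let $g^i$ be the $i$-th copy of $g$, which exists because $i \le n(r(e)) = n(r(g))$; then Equation \eqref{rangemap} gives $r_\Gamma(g^i) = r(g)^i = r(e)^i = r_\Gamma(e^i)$, and $d_\Gamma(g^i) = d(g) = d(f) = d_\Gamma(f^j)$. The vertex $s_\Gamma(g^i)$ is a copy $s(g)^l$ of $s(g) = r(h)$, so I let $h^l$ be the $l$-th copy of $h$; then $r_\Gamma(h^l) = r(h)^l = s(g)^l = s_\Gamma(g^i)$, making $g^ih^l$ a path with $d_\Gamma(h^l) = d(h) = d(e) = d_\Gamma(e^i)$. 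The condition $par(g^i)par(h^l) = gh \sim ef = par(e^i)par(f^j)$ is immediate from the first step, so all that remains is the source condition $s_\Gamma(h^l) = s_\Gamma(f^j)$.

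This source condition is the main obstacle, and it is precisely where the hypothesis that one edge has degree $B$, together with the definition of $s_\Gamma$, gets used. Since $s(h) = s(f)$, both sides are copies $s(f)^m$ of one vertex, so I must show the superscripts agree. I would treat $d(f) = B$ in detail (so $g$ is the $B$-edge and $h$ is not), the case $d(e) = B$ being symmetric. Here the first branch of \eqref{sourcemap} gives $s_\Gamma(f^j) = s(f)^{m_0}$ with $f \in \mathcal{E}_{m_0}^{s(f)}$, while $s_\Gamma(h^l)$ comes from the second branch: choose a $B$-edge $h' \in \mathcal{E}_l^{r(h)}$, commute $h'h \sim a^*c^*$ with $c^*$ of degree $B$, and read $s_\Gamma(h^l) = s(h)^{m_1}$ off $c^* \in \mathcal{E}_{m_1}^{s(h)}$. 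The crucial observation is that $g$ itself lies in $\mathcal{E}_l^{r(h)}$: indeed $l$ was \emph{defined} by $s_\Gamma(g^i) = s(g)^l$, and since $g$ has degree $B$ the first branch of \eqref{sourcemap} yields $g \in \mathcal{E}_l^{s(g)} = \mathcal{E}_l^{r(h)}$. Because $s_\Gamma$ is well-defined, I may take $h' = g$; then $gh \sim a^*c^*$, and the uniqueness in \ref{KG2} forces $a^*c^* = ef$, hence $c^* = f$. Thus $f \in \mathcal{E}_{m_1}^{s(h)} = \mathcal{E}_{m_1}^{s(f)}$, and disjointness of the partition classes gives $m_1 = m_0$, i.e. $s_\Gamma(h^l) = s_\Gamma(f^j)$.

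For uniqueness I would take any two-color path $\bar g^{\bar i}\bar h^{\bar l}$ in $\Gamma$ meeting the five conditions: $r_\Gamma(\bar g^{\bar i}) = r(e)^i$ forces $r(\bar g) = r(e)$ and $\bar i = i$ (distinct copies have distinct ranges, Remark \ref{rmk: distinctrange}), while $par(\bar g^{\bar i})par(\bar h^{\bar l}) \sim ef$ with the prescribed degrees plus the uniqueness in \ref{KG2} forces $\bar g\bar h = gh$, so $\bar g^{\bar i} = g^i$; finally $r_\Gamma(\bar h^{\bar l}) = s_\Gamma(g^i) = r(h)^l$ forces $\bar l = l$. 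I expect the only genuinely delicate point to be the legitimacy of replacing $h'$ by $g$ in the source-map computation, which rests squarely on the already-established well-definedness of $s_\Gamma$ and on the uniqueness clause of \ref{KG2}; everything else is bookkeeping of superscripts.
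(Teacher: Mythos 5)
Your proposal is correct and follows essentially the same route as the paper: lift the unique commuting square from $\Lambda$, use Equation \eqref{rangemap} to match $g^i$ with $e^i$, identify $h^l$ as the unique copy of $h$ ranging at $s_\Gamma(g^i)$, and then verify the source condition by a two-case analysis on which edge has degree $B$, reading the superscripts off the branches of \eqref{sourcemap} via the partition classes $\mathcal{E}^v_m$. The only cosmetic differences are that you make explicit the substitution $h' = g$ in the second branch of the source map (which the paper leaves implicit) and defer the $d(e)=B$ case to symmetry where the paper writes it out.
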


\begin{proof}
Let $e^if^j$ be a  two edge two-color path in $\Gamma$, where either $e^i$ or $f^j$ has degree $B$. Let the parent path of $e^if^j$ in $\Lambda$ be $ef$, which exists by Lemma \ref{lemma: parentpaths}. Since $\Lambda$ is a $k$-graph, there exists a unique two color path $gh$ such that $gh \sim ef$. In particular, we have that $r(g) = r(e)$. By Definition \ref{def: 1-skeletonlimarsplit}, there are $n(r(g))$ copies of $g$, $e$, and $r(g)$ in $\Gamma$. Moreover, if $1 \leq m \leq n(r(g))$, then $r_{\Gamma}(g^m) =r(g)^m = r(e)^m = r_{\Gamma}(e^m)$, which implies that there exists $g^i \in \Gamma$ such that $r_{\Gamma}(g^i) = r_{\Gamma}(e^i)$. 

By Remark \ref{rmk: distinctrange} and since $\ell(h) = s(g)$, there is exactly one copy of $h$ in $r_{\Gamma}^{-1}(s(g^i))$, which we will denote as $h^l$. In order to show $s_{\Gamma}(h^l) =  s_{\Gamma}(f^j)$, we will consider two cases.

For the first case, suppose that $d_{\Gamma}(h^l) \in A$ and $d_{\Gamma}(f^j) = B$. Since $d_{\Gamma}(h^l) \in A$ and $gh \sim ef$, then $d(g) = B$, which implies $g$ is in some partition of $\ell(h)$. Specifically, since $\ell(h)^l = r_{\Gamma}(h^l) = s_{\Gamma}(g^i)$, we know $g \in \mathcal{E}_l^{\ell(h)}$. Then, by Equation \eqref{sourcemap}, and since $gh \sim ef$, $s_{\Gamma}(h^l) = s(f)^p$, where $f \in \mathcal{E}_p^{s(f)}$.
Then, because $d(f) = B$ and $f \in \mathcal{E}_p^{s(f)}$, all copies of $f$ in $\Gamma$ are sourced from $s(f)^p$, including $f^j$. Thus, $s_{\Gamma}(h^l) = s_{\Gamma}(f^j)$.

For the second case, suppose $d_{\Gamma}(h^l) = B$ and $d_{\Gamma}(f^j) \in A$, which implies $d_{\Gamma}(e^i) = B$. From Equation \eqref{sourcemap}, we know that $s_{\Gamma}(h^l) = s(h)^q$ where $h \in \mathcal{E}_q^{s(h)}$. By hypothesis, $s_{\Gamma}(e^i) = r_{\Gamma}(f^j) = r(f)^j$, which implies $e \in \mathcal{E}_j^{r(f)}$. By Equation \eqref{sourcemap}, since $ef \sim gh$, then $s_{\Gamma}(f^j) = s(f)^q= s(h)^q = s_{\Gamma}(h^l)$. 

Finally, we know that $g^ih^l$ is the unique  path such that $par(e^i)par(f^j) \sim par(g^i)par(h^l)$, as Remark \ref{rmk: distinctrange} implies that there can be no other copy of $g$ that ranges to $r_{\Gamma}(e^i)$. Furthermore, since $d(e) = d(h)$ and $d(f) = d(g)$, then $d_\Gamma(e^i) = d_\Gamma(h^l)$ and $d_\Gamma(g^i) = d_\Gamma(f^j)$ by Definition \ref{def: degreemaplimar}. 
\end{proof}

Recall that in \cite[Theorem 5.4]{efgggp} it is proven that sink deletion on a $k$-graph, which is the removal of a vertex which is a sink in at least one degree, yields a $k$-graph whose $C^*$-algebra is Morita equivalent to the $C^*$-algebra of the original $k$-graph. Therefore, if $k \geq 3$, we can assume $\Lambda$ is degree $B$ sink-free in the following proof, which establishes \ref{KG2} for length two paths with edges whose degrees are both in $A$. For a review of the meaning of \textit{degree $B$ sink-free}, see Definition \ref{defn: d(B) sink}.

\begin{lemma} \label{lemma: KG2AA}
Suppose $(\Lambda,d)$ is a degree $B$ sink-free $k$-graph with $k \geq 3$, and suppose $(\Gamma, d_\Gamma)$ is the LiMaR-split of $\Lambda$ in degree $B$. If $g_1^je_1^n$ is a two edge two color path in $\Gamma$ with $d_\Gamma(g_1^j), d_\Gamma(e_1^n) \neq B$, then there exists a unique two color path $e_2^jg_2^i$ in $\Gamma$ such that $d_\Gamma(g_2^i) = d_\Gamma(g_1^j), d_\Gamma(e_2^j) =  d_\Gamma(e_1^n), s_\Gamma(g_2^i) = s_\Gamma(e_1^n)$, $r_\Gamma(e_2^j) = r_\Gamma(g_1^j)$, and $par(g_1^j)par(e_1^n) \sim par(e_2^j)par(g_2^i)$.

\end{lemma}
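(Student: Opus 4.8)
The plan is to lift the commuting square from $\Lambda$ and then use the factorization property of $\Lambda$ inside a three-color cube to verify that the sources match in $\Gamma$. First I would pass to parents: by Lemma \ref{lemma: parentpaths}, $par(g_1^j)par(e_1^n) = g_1e_1$ is a two-edge path in $\Lambda$, so since $\Lambda$ is a $k$-graph there is a unique two-color path $e_2g_2$ with $g_1e_1 \sim e_2g_2$, $d(e_2)=d(e_1)$, and $d(g_2)=d(g_1)$. Because $r(e_2)=r(g_1)$ and $1 \le j \le n(r(g_1))$, the copy $e_2^j$ is defined and satisfies $r_\Gamma(e_2^j)=r(e_2)^j=r(g_1)^j=r_\Gamma(g_1^j)$, and the two degree conditions are immediate from Definition \ref{def: degreemaplimar}. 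The index $i$ and the edge $g_2^i$ will be forced once I compute $s_\Gamma(e_2^j)$.

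Next I would run the source-map computations. The three colors $B$, $d(g_1)$, $d(e_1)$ are pairwise distinct (this is exactly where $k \ge 3$ enters, since otherwise no such two-color $A$-path exists), and the degree $B$ sink-free hypothesis guarantees $s_B^{-1}(v)\neq\emptyset$ for every relevant range vertex $v$, so only the middle case of Equation \eqref{sourcemap} is ever invoked. Fix a $B$-edge $f \in \mathcal{E}_j^{r(g_1)}$; note $r(e_2)=r(g_1)$, so this same $f$ serves for both $g_1^j$ and $e_2^j$. Sliding $f$ across the commuting squares of $\Lambda$ around the base square $g_1e_1\sim e_2g_2$ produces $B$-edges via two routes: along $g_1$ then $e_1$ I get $fg_1\sim a_1c_1$ and $c_1e_1\sim a_2c_2$; along $e_2$ then $g_2$ I get $fe_2\sim a_3c_3$ and $c_3g_2\sim a_4c_4$. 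By the middle case of Equation \eqref{sourcemap}, the partition class of each resulting $B$-edge records the relevant source: $s_\Gamma(g_1^j)$ is read off from $c_1$, and since $g_1^je_1^n$ is a path we get $c_1\in\mathcal{E}_n^{r(e_1)}$, so by the well-definedness already established $s_\Gamma(e_1^n)$ is read off from $c_2$. Likewise $s_\Gamma(e_2^j)$ is read off from $c_3$ — this determines $i$ as the class of $c_3$ and makes $e_2^jg_2^i$ a path, since then $r_\Gamma(g_2^i)=s(e_2)^i=s_\Gamma(e_2^j)$ — and $s_\Gamma(g_2^i)$ is read off from $c_4$.

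The crux, which I expect to be the main obstacle and the only place the cube is genuinely used, is to show $c_2=c_4$. Both arise as the degree-$B$ factor of the single three-edge path $fg_1e_1$ in $\Lambda$: route one rewrites $fg_1e_1\sim a_1a_2c_2$, and route two rewrites $fg_1e_1 \sim fe_2g_2 \sim a_3a_4c_4$, where in each case the last edge has degree $B$ and the first two edges compose to degree $d(g_1)+d(e_1)$. Since $\Lambda$ is a $k$-graph, the factorization $fg_1e_1=\mu\nu$ with $d(\mu)=d(g_1)+d(e_1)$ and $d(\nu)=B$ is unique, forcing $c_2=c_4$ (equivalently, this is the associativity condition \ref{KG3} applied to $fg_1e_1$). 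As $s(g_2)=s(e_1)$ and $c_2=c_4$ lie in the same partition class, I conclude $s_\Gamma(g_2^i)=s_\Gamma(e_1^n)$, the last required equality; the commutation $par(e_2^j)par(g_2^i)=e_2g_2\sim g_1e_1=par(g_1^j)par(e_1^n)$ holds by construction.

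Finally, for uniqueness I would observe that any path $e_2'^{j'}g_2'^{i'}$ meeting the five conditions has $par(e_2'^{j'})par(g_2'^{i'})$ a commuting partner of $g_1e_1$ with the prescribed colors, so $e_2'=e_2$ and $g_2'=g_2$ by the uniqueness of the square in $\Lambda$. Then $r_\Gamma(e_2'^{j'})=r_\Gamma(g_1^j)$ together with Remark \ref{rmk: distinctrange} forces $j'=j$, and the path condition $r_\Gamma(g_2'^{i'})=s_\Gamma(e_2^j)$ forces $i'=i$.
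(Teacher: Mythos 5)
Your proposal is correct and follows essentially the same route as the paper's proof: both attach a degree-$B$ edge $f=b_1\in\mathcal{E}_j^{r(g_1)}$ to the lifted square $g_1e_1\sim e_2g_2$ and chase Equation \eqref{sourcemap} around the resulting three-color cube in $\Lambda$ to match the sources, finishing uniqueness via Remark \ref{rmk: distinctrange}. The only difference is bookkeeping: where you identify $c_2=c_4$ by the unique factorization of $fg_1e_1$ with the degree-$B$ factor last, the paper closes the same hexagon of six equivalent length-three paths by invoking \ref{KG3} directly (your $a_1,c_1,a_2,c_2,a_3,c_3,a_4,c_4$ are its $g_4,b_4,e_4,b_3,e_3,b_2,g_3,b_3$), and these justifications are equivalent.
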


\begin{proof}
Let $g_1^je_1^n$ be a two color path in $\Gamma$  with $d_\Gamma(g_1^j) \neq d_\Gamma(e_1^n)$ and $d_\Gamma(g_1^j), d_\Gamma(e_1^n) \neq B$. Let $par(g_1^je_1^n) = g_1e_1$. Lemma \ref{lemma: parentpaths} implies $g_1e_1$ is a two color path in $\Lambda$. Let $e_2g_2$ be the  unique path in $\Lambda$ that commutes with  $g_1e_1$. In particular, $d(g_1) = d(g_2)$ and $d(e_1)=d(e_2)$. Furthermore, since $\Lambda$ is degree $B$ sink-free and $1\leq j \leq n(r(g_1))$ since $g_1^j$ exists, then $\mathcal{E}_j^{r(g_1)}$ is nonempty. Let $b_1$ denote the edge in $\mathcal{E}_j^{r(g_1)}$. Since $\Lambda$ is a $k$-graph, meaning it satisfies \ref{KG3}, $b_1g_1e_1$ commutes with five other three color paths, which we will use to prove there exists a unique two color path $e_2^jg_2^i$ in $\Gamma$ with $par(g_1^j)par(e_1^n) \sim par(e_2^j)par(g_2^i)$. To satisfy \ref{KG3} for the path $b_1g_1e_1$, we introduce the following edges in $\Lambda$: $b_2, b_3, b_4, e_3, e_4, g_3,$ and $g_4$, and set the following equivalence relations in $\Lambda$:

\begin{minipage}[r]{0.4\textwidth}
\begin{align*}
\ g_1e_1 \sim e_2g_2
&\implies b_1g_1e_1 \sim b_1e_2g_2 \\
&\,\,\,\,\text{let} \,\,\,\,\,\, b_1e_2 \sim e_3b_2 \\
&\implies  b_1e_2g_2 \sim e_3b_2g_2 \\
&\,\,\,\,\text{let} \,\,\,\,\,\, b_2g_2 \sim g_3b_3 \\
&\implies e_3b_2g_2 \sim e_3g_3b_3 \\
&\,\,\,\,\text{let} \,\,\,\,\,\, e_3g_3 \sim g_4e_4 \\
&\implies e_3g_3b_3 \sim g_4e_4b_3 \\
&\,\,\,\,\text{let} \,\,\,\,\,\, e_4b_3 \sim b_4e_1  \\
&\implies g_4e_4b_3 \sim g_4b_4e_1 \\
&\,\,\,\,\text{let} \,\,\,\,\,\, g_4b_4 \sim b_1g_1 \\
&\implies g_4b_4e_1 \sim b_1g_1e_1. \\
\end{align*}
\end{minipage}

First, we will establish that $e_2^j$ exists and $r_{\Gamma}(g_1^j) = r_{\Gamma}(e_2^j)$. Since $r(g_1) = r(e_2)$, by Definition \ref{def: 1-skeletonlimarsplit} the edge $e_2^j$ exists in $\Gamma$. Moreover, by Equation \eqref{rangemap}, $r_{\Gamma}(g_1^j) = r(g_1)^j = r(e_2)^j = r_{\Gamma}(e_2^j)$. 

Next, we must show that there exists a copy of $g_2$ that is concatenable with $e_2^j$. By the equivalencies above, we know there exists an edge $b_2$ such that $b_1e_2 \sim e_3b_2$. Let $b_2 \in \mathcal{E}_i^{s(b_2)}$ for some $i \in \{1, 2, \ldots, n(s(b_2))\}$. Then, by Equation \eqref{sourcemap}, $b_1e_2 \sim e_3b_2$ implies $s_{\Gamma}(e_2^j) = s(b_2)^i$. Additionally, since $b_2g_2$ is a path in $\Lambda$, there exists a $g_2^i \in \Gamma^1$ with $r_{\Gamma}(g_2^i) = s(b_2)^i = s_{\Gamma}(e_2^j)$ by Equation \eqref{rangemap}. Therefore $e_2^j g_2^i$ is a path in $\Gamma$.

By hypothesis, we know that $s_{\Gamma}(g_1^j) = r_{\Gamma}(e_1^n) = r(e_1)^n$. Then, since $b_1g_1 \sim g_4b_4$, $s_{\Gamma}(g_1^j) = s(b_4)^n$. By Equation \eqref{sourcemap}, this implies $b_4 \in \mathcal{E}_n^{s(b_4)}$.

Lastly we show that $s_{\Gamma}(e_1^n) = s_{\Gamma}(g_2^i)$. Suppose $b_3 \in \mathcal{E}_h^{s(b_3)}$ for some $h \in \{1,2,\ldots, n(s(b_3))\}$. Then by Equation \eqref{sourcemap}, since $b_2g_2 \sim g_3b_3$, we have $s_{\Gamma}(g_2^i) = s(b_3)^h$. Next, because $b_4e_1 \sim e_4b_3$ and $b_4 \in \mathcal{E}_n^{s(b_4)}$, we know $s_{\Gamma}(e_1^n) = s(b_3)^h = s_{\Gamma}(g_2^i)$ by Equation \eqref{sourcemap}.

Putting it all together, we have  established that $e_2^jg_2^i$ is a two color path in $\Gamma$ such that $d_\Gamma(g_2^i) = d(g_2) = d(g_1) = d_\Gamma(g_1^j), d_\Gamma(e_2^j) = d(e_2) = d(e_1) = d_\Gamma(e_1^n), s_{\Gamma}(g_2^i) = s_{\Gamma}(e_1^n)$, $r_{\Gamma}(e_2^j) = r_{\Gamma}(g_1^j)$, and $par(g_1^j)par(e_1^n) \sim par(e_2^j)par(g_2^i)$. Furthermore, there cannot exist another such path since Remark \ref{rmk: distinctrange} implies that $e_2^j$ is the only copy of $e_2$ with the same range as $g_1^j$.
\end{proof}

The following lemma establishes that \ref{KG3} is satisfied after LiMaR-splitting. 

\begin{lemma} \label{lemma: KG3}
Suppose $(\Lambda,d)$ is a $k$-graph with $k \geq 3$ and $(\Gamma,d_\Gamma)$ is the LiMaR-split of $\Lambda$ in degree $B$. All three color paths of length three in $\Gamma$ commute with exactly five other paths of length three that have the same range and source and a different color order.

\end{lemma}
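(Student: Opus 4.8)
The plan is to deduce the associativity of $\Gamma$ from the associativity \ref{KG3} already available for $\Lambda$, using the parent function of Definition \ref{def: parent function} as the link and reducing everything to a single structural fact: \emph{a length-three path in $\Gamma$ is determined by its parent path in $\Lambda$ together with the range of its initial edge.} Indeed, if $e_1^{i_1}e_2^{i_2}e_3^{i_3}$ is a path in $G_L^*$, then Equation \eqref{rangemap} shows $i_1$ is fixed by $r_\Gamma(e_1^{i_1})$; the source map then fixes $r_\Gamma(e_2^{i_2}) = s_\Gamma(e_1^{i_1})$, and since distinct copies of $e_2$ have distinct ranges (Remark \ref{rmk: distinctrange}) the index $i_2$ is forced, and likewise $i_3$. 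Hence two length-three paths in $\Gamma$ sharing a parent and a first-edge range are equal. I would record this first, as it is the engine for both uniqueness and the consistency of the reorderings.

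Next I would note that condition \ref{KG2} now holds for every two-color length-two path in $\Gamma$: the swaps involving a degree $B$ edge are covered by Lemma \ref{lemma: KG2AB} and the $A$--$A$ swaps by Lemma \ref{lemma: KG2AA} (for which, since $k \geq 3$, we may assume $\Lambda$ is degree $B$ sink-free). Given a three-color length-three path $p_1p_2p_3$ in $\Gamma$, each of the six color orders is then reachable by a chain of adjacent two-color swaps, exactly as in the routes of \ref{KG3}; each swap preserves the overall range and source and, by Definition \ref{def: equivrelationlimar}, projects under the parent function to the corresponding swap in $\Lambda$. This gives \emph{existence} of a commuting partner in each color order. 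For \emph{uniqueness}, any partner with the prescribed range and source must have parent equal to the unique reordering of $par(p_1)par(p_2)par(p_3)$ in that color order (by the factorization property of $\Lambda$) and the same first-edge range, so the structural fact forces it to be unique. Uniqueness in each target order is exactly the associativity \ref{KG3} for $\Gamma$, since the two routes both land in the same order and therefore coincide. Finally, paths of different color orders are distinct elements of $G_L^*$, so the five nontrivial orders yield precisely five distinct commuting partners, each with the same range and source as $p_1p_2p_3$.

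The main obstacle I anticipate is the index bookkeeping concealed in ``each swap preserves range and source and lifts the $\Lambda$-swap'': verifying this at every intermediate step requires chasing the copy-superscripts through the case distinctions of the source map in Equation \eqref{sourcemap} (degree $B$ versus degree in $A$, and membership in $S_w$), much as in the proofs of Lemmas \ref{lemma: KG2AB} and \ref{lemma: KG2AA}. The point of isolating the structural fact at the outset is precisely to sidestep a full six-step route-by-route computation: once both routes are seen to have the same parent (immediate from \ref{KG3} in $\Lambda$) and the same first-edge range (immediate from range preservation), their equality is automatic, so no intermediate index need ever be computed.
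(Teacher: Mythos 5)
Your proposal is correct and follows essentially the same route as the paper: both arguments push the question down to \ref{KG3} in $\Lambda$ via the parent function and then force the copy superscripts one edge at a time using Remark \ref{rmk: distinctrange}, which is exactly your ``structural fact'' that a length-three path in $\Gamma$ is determined by its parent and the range of its initial edge. The only difference is cosmetic: you spell out the existence half (chaining the two-color swaps from Lemmas \ref{lemma: KG2AB} and \ref{lemma: KG2AA}) which the paper leaves implicit, while the paper concentrates on the uniqueness half.
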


\begin{proof}
For all $n,m \in \NN$, let $e_n, e_m, b_n, b_m, g_n, g_m$ in $\Lambda^1$ be such that $d(e_n) = d(e_m)$, $d(b_n) = d(b_m)$, $d(g_n) = d(g_m)$, $d(e_n) \neq d(b_n)$, $d(e_n) \neq d(g_n)$, and $d(b_n) \neq d(g_n)$. Suppose that $e_1^jg_1^kb_1^l$ is a path of length three in $\Gamma$ that commutes with two potentially different paths of length three of the same color order: $b_2^ug_2^ve_2^w$ and $b_3^xg_3^ye_3^z$. By Lemma \ref{lemma: parentpaths}, the parent of any path of length three in $\Gamma$ is a path of length three in $\Lambda$, which implies that $par(e_1^jg_1^kb_1^l) = e_1g_1b_1$ is a path of length three in $\Lambda$. Since all relations of paths of length three can be determined via swapping commuting paths of length two, the parents of commuting paths of length three in $\Gamma$ must also commute in $\Lambda$. Since \ref{KG3} is true for $\Lambda$, we know that $par(b_2^ug_2^ve_2^w) = par(b_3^xg_3^ye_3^z)$, which allows us to rename $b_3^xg_3^ye_3^z$ to $b_2^xg_2^ye_2^z$. Then, observe that $b_2^u$ and $b_2^x$ must have the same range since they are both the last edge in a path of length three that commutes with $e_1^jg_1^kb_1^l$. However, by Remark \ref{rmk: distinctrange}, copied edges must have different ranges, so we must have $b_2^u = b_2^x$, which implies that $r(g_2^v) = r(g_2^y) = s(b_2^u)$, which means we have $g_2^v = g_2^y$. By similar reasoning, we must have $e_2^w = e_2^z$. Thus, we have $b_2^ug_2^ve_2^w = b_3^xg_3^ye_3^z$, which implies that \ref{KG3} is satisfied for $\Gamma$.
\end{proof}
 
\begin{thm} \label{thm:kgraph}
    Let $(\Lambda,d)$ be a source-free, row-finite $k$-graph. If $k \geq 3$, additionally assume that $(\Lambda,d)$ is degree $B$ sink-free. Then the LiMaR-split $(\Gamma,d_\Gamma)$ of $\Lambda$ in degree $B$ is a source-free, row-finite $k$-graph.
\end{thm}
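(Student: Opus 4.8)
The plan is to invoke Theorem \ref{KGs}: since $\Gamma = G_L^*/\sim_L$ is built from the $k$-colored $1$-skeleton $G_L$ equipped with the degree map $d_\Gamma$ and the equivalence relation $\sim_L$, it suffices to check that $\sim_L$ is an $(r,s,d)$-preserving equivalence relation satisfying \ref{KG0}--\ref{KG3}. Having done that, I would verify separately that $\Gamma$ is source-free and row-finite, since neither property is part of Theorem \ref{KGs}. The attractive feature of this approach is that the substantive content of \ref{KG2} and \ref{KG3} has already been extracted into the preceding lemmas, so the theorem is largely an assembly.

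First I would dispatch the conditions not covered by a named lemma. That $\sim_L$ is an equivalence relation is immediate from Definition \ref{def: equivrelationlimar} together with reflexivity, symmetry, and transitivity of $\sim$ on $\Lambda$; it is $(r,s,d)$-preserving because $\sim_L$ is defined to fix $r_\Gamma$ and $s_\Gamma$, while $d_\Gamma = d \circ par$ and $\sim$ preserves $d$. For \ref{KG0}, given $\lambda = \lambda_2\lambda_1$ in $G_L^*$ with $p_1 \sim_L \lambda_1$ and $p_2 \sim_L \lambda_2$, the matching of ranges and sources forces $p_2p_1$ to be a composable path with the same range and source as $\lambda$; applying the morphism $par$ and Lemma \ref{lemma: parentpaths} then reduces the equivalence $p_2p_1 \sim_L \lambda_2\lambda_1$ to \ref{KG0} for $\Lambda$. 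For \ref{KG1}, if $f \sim_L g$ for edges $f, g \in \Gamma^1$, then $par(f) \sim par(g)$, so $par(f) = par(g)$ by \ref{KG1} for $\Lambda$; since $f$ and $g$ are then copies of a common edge with equal range, Remark \ref{rmk: distinctrange} forces $f = g$.

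For \ref{KG2} and \ref{KG3} I would only assemble the cases. Given a two-color length-two path in $\Gamma$, I split on whether one of its edges has degree $B$: Lemma \ref{lemma: KG2AB} supplies the unique commuting path in that case, and Lemma \ref{lemma: KG2AA} does so when both edges have degree in $A$. This is precisely where the $k$-dependent hypotheses enter. When $k = 2$ the set $A$ is a single color, so every two-color path has an edge of degree $B$ and only Lemma \ref{lemma: KG2AB} is needed; for $k \geq 3$ two distinct non-$B$ colors can occur, and Lemma \ref{lemma: KG2AA}---the only place the degree $B$ sink-free hypothesis is used---becomes necessary. Condition \ref{KG3} concerns three-color paths, hence is vacuous when $k = 2$ and follows directly from Lemma \ref{lemma: KG3} when $k \geq 3$.

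Finally, I would verify source-freeness and row-finiteness. Because $r_\Gamma(e^m) = r(e)^m$ and $d_\Gamma = d \circ par$, the color-$\mathbf{e}_\ell$ edges of $\Gamma$ with range $v^i$ are exactly the copies $e^i$ of the edges $e \in v\Lambda^{\mathbf{e}_\ell}$; this gives a bijection $v\Lambda^{\mathbf{e}_\ell} \to v^i\Gamma^{\mathbf{e}_\ell}$ that transports finiteness and nonemptiness from $\Lambda$ (by its row-finiteness and source-freeness) to $\Gamma$. I expect the main obstacle to be bookkeeping rather than conceptual: the real work of \ref{KG2} lives in Lemmas \ref{lemma: KG2AB} and \ref{lemma: KG2AA}, and the delicate point in the assembly is making the $k = 2$ versus $k \geq 3$ dichotomy precise, so that the degree $B$ sink-free hypothesis is invoked exactly when Lemma \ref{lemma: KG2AA} applies and never when it is unavailable.
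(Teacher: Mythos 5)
Your proposal is correct and follows essentially the same route as the paper's proof: invoke Theorem \ref{KGs}, note that \ref{KG0} and \ref{KG1} hold by construction (you supply slightly more detail here via $par$ and Remark \ref{rmk: distinctrange}), obtain \ref{KG2} and \ref{KG3} from Lemmas \ref{lemma: KG2AB}, \ref{lemma: KG2AA}, and \ref{lemma: KG3}, and then check source-freeness and row-finiteness through the bijection $v\Lambda^{\mathbf{e}_i} \to v^j\Gamma^{\mathbf{e}_i}$ given by taking the unique copy with range $v^j$. Your explicit remark that the $k=2$ case needs only Lemma \ref{lemma: KG2AB} (so the sink-free hypothesis is genuinely tied to $k \geq 3$) is a point the paper leaves implicit but is consistent with its argument.
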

\begin{proof}
By Theorem 2.1. in \cite{efgggp}, $\Gamma$ is a $k$-graph if \ref{KG0}-\ref{KG3} are satisfied. By construction, \ref{KG0} and \ref{KG1} are satisfied.  By Lemmas \ref{lemma: KG2AB}, \ref{lemma: KG2AA}, and \ref{lemma: KG3}, we have that \ref{KG2} and \ref{KG3} are satisfied. Thus, $\Gamma$ is a $k$-graph.

Let $v^j$ be a vertex in $\Gamma$ with $par(v^j) = v$, and fix $\mathbf{e}_i \in \mathbb{N}^k$. Since $\Lambda$ is source-free, there exists an $f \in \Lambda^{\mathbf{e}_i}$ with $r(f) = v$. Then, by Definition \ref{def: 1-skeletonlimarsplit}, there exists $f^j \in \Gamma^{\mathbf{e}_i}$ with $r(f^j) = v^j$. Thus, $\Gamma$ is source-free.

Now, to prove $\Gamma$ is row-finite, observe that $f^j$ is the only copy of $f$ whose range is $v^j$, and for every edge in $\Lambda$ whose range is $v$, there is exactly one copy of the edge in $\Gamma$ whose range is $v^j$. Therefore $|v ^j\Gamma^{\mathbf{e}_i}| = |v\Lambda^{\mathbf{e}_i}| <\infty$ because $\Lambda$ is row-finite.
\end{proof}

\section{Proving Morita equivalence of $C^*(\Lambda)$ and $C^*(\Gamma)$} \label{sec: ME}

The final section explores the relationship between the $C^*$-algebras $C^*(\Lambda)$ and $C^*(\Gamma)$ as well as the relationship between the Kumjian-Pask algebras $\mathrm{KP}_\mathbb{C}(\Lambda)$ and $\mathrm{KP}_\mathbb{C}(\Gamma)$. The results of this section culminate in the proof that the $C^*$-algebras $C^*(\Lambda)$ and $C^*(\Gamma)$ are Morita equivalent, where $\Gamma$ is the result of LiMaR-splitting a $k$-graph $\Lambda$ that is degree $B$ sink-free if $k \geq 3$.

The $j^{th}$ copy of an edge is presented in Definition \ref{def: 1-skeletonlimarsplit}. Now we define the $j^{th}$ copy of a path. For each $f= f_m\cdots f_1 \in \Lambda^{\neq \mathbf{0}}$ and for each $1 \leq j \leq n(r(f))$, we define $f^j \in \Gamma$ by $f^j = f_m^{a_m} f_{m-1}^{a_{m-1}} \cdots f_1^{a_1}$, where $a_m = j$ and $s(f_i)^{a_{i-1}}= s_\Gamma(f_{i}^{a_{i}})$.  As the $s_\Gamma(e^i) = s(e)^j$ for some $1 \leq j \leq n(s(e))$, $1 \leq a_{i-1} \leq n( s(f_i)) = n( r(f_{i-1}))$.  Thus, $f_m^{a_m} f_{m-1}^{a_{m-1}} \cdots f_1^{a_1}$ is a path in $\Gamma$.

Let $S \subseteq \{1, \ldots, k \}$.  Denote by $\mathbbm{1}_S = \sum_{ i \in S} \mathbf{e}_i$.  When $S = \{ 1, \ldots, k \}$, we write $\mathbbm{1}$ instead of $\mathbbm{1}_{\{1,\ldots, k\} }$. The next three lemmas are technical lemmas about the $j^{th}$ copy of a path in $\Lambda$. These results free us from requiring $\Lambda$ to satisfy a pairing condition like was done by Listhartke, for example [c.f. \cite{listhartke}]. 

\begin{lem}\label{lem-outsplit-uniq-source}
Let $\Lambda$ be a $k$-graph that is degree $B$ sink-free if $k \geq 3$, let $\Gamma$ be the LiMaR-split of $\Lambda$ at $w$ of degree $B$, and let $S \subseteq \{1, \ldots, k\}$ such that $B \in \bigcup_{ i \in S } \{ \mathbf{e}_i \}$.   Let $f , g \in \Lambda^{\mathbbm{1}_S}$ such that $f = g$.  Then for all $1 \leq j \leq n(r(f))$, $f^j =g^j$ and $s_\Gamma(f^j) = s_\Gamma(f^1)$. 
\end{lem}

\begin{proof}
    First we prove that if $S \subseteq \{1,2, \ldots,k\}$ and $f, g \in \Lambda^{\mathbbm{1}_S}$ such that $f=g$, then $f^j = g^j$ for all $1 \leq j \leq n(r(f))$. If $|S| =1$, then $f^j = g^j$. Now suppose the claim is true for all $S$ with $|S| = m-1$ for some $1 < m \leq k$. Suppose $|S|=m$ and $f,g \in \Lambda^{\mathbbm{1}_S}$ such that $f=g$ and $1 \leq j \leq n(r(f))$. Fix $i \in S$ and write $f = \alpha f'$ where $d(\alpha) = e_i$. Then, since $f=g$, we may write $g = \alpha g'$ where $g' = f'$. Suppose $s_\Gamma(\alpha^j) = s(\alpha)^\ell$. Then $f^j = \alpha^j f'^\ell$ and $g^j = \alpha^j g'^\ell$. Since $f'=g' \in \Lambda^{\mathbbm{1}_{S\setminus i}}$, then $f'^\ell = g'^\ell$ by induction hypothesis. So $f^j = g^j$.

    Now suppose $S \subseteq \{1,\ldots, k\}$ such that $B \in \cup_{i \in S}\{\mathbf{e}_i\}$ and $f \in \Lambda^{\mathbbm{1}_S}$. Then we can write $f= \alpha f'$ where $d(\alpha) = B$. Suppose $s_\Gamma(\alpha^j) = s(\alpha)^\ell$. Then $s_\Gamma(\alpha^1) = s(\alpha)^\ell$ as well since $d(\alpha) = B$. Then $f^j = \alpha^j f'^\ell$ and $f^1 = \alpha^1 f'^\ell$, so $s_\Gamma(f^j) = s_\Gamma(f'^\ell) = s(f^1)$.
\end{proof}

\begin{lem}\label{lem-uniq1}
Let $\Lambda$ be a $k$-graph that is degree $B$ sink-free if $k \geq 3$, and let $\Gamma$ be the LiMaR-split of $\Lambda$ at $w$ of degree $B$.  Let $f = f_m\cdots f_1 \in \Lambda$.  Suppose $s_\Gamma(f_m^j) = s_\Gamma(f_m^1) = s(f_m)^i$.  Then $f^j = f_m^j f_{m-1}^{a_{m-1}}\cdots f_{1}^{a_1}$ and $f^1 = f_m^1 f_{m-1}^{a_{m-1}}\cdots f_{1}^{a_1}$, where $s(f_i)^{a_{i-1}}= s_\Gamma(f_{i}^{a_{i}})$.  
\end{lem}

\begin{proof}
Set $f^j = f_m^j f_{m-1}^{a_{m-1}} \cdots f_{1}^{a_1}$ and $f^1 = f_m^1 f_{m-1}^{b_{m-1}} \cdots f_1^{b_1}$, where $s(f_i)^{a_{i-1}}= s_\Gamma(f_{i}^{a_{i}})$ and $s(f_i)^{b_{i-1}}= s_\Gamma(f_{i}^{b_{i}})$.  Since $s_\Gamma(f_m^j) = s_\Gamma(f_m^1) = s(f_m)^i$, we have $a_{m-1} = b_{m-1} = i$.  Therefore,
\[
s(f_{m-1})^{a_{m-2}} = s_\Gamma( f_{m-1}^{a_{m-1}} ) = s_\Gamma( f_{m-1}^{b_{m-1}} ) = s(f_{m-1})^{b_{m-2}}.
\]
We can continue this process to show that $a_i = b_i$ for all $i$.
\end{proof}

\begin{lem}\label{lem-uniq2}
Let $\Lambda$ be a $k$-graph that is degree $B$ sink-free if $k \geq 3$, and let $\Gamma$ be the LiMaR-split of $\Lambda$ at $w$ of degree $B$.  Let $\mu \in \Gamma^{\mathbbm{1}}$ and let $f = \operatorname{par} (\mu)$.  Then $f^j = \mu$ where $r_\Gamma(\mu) = r(f)^j$.
\end{lem}

\begin{proof}
Write $\mu = g_k \cdots g_1$, where $\deg(g_1)=B$.  Then $f = f_k\cdots f_1$, where $f_i = \operatorname{par}(g_i)$.   Then $f^j = f_k^j f_{k-1}^{a_{k-1}} \cdots f_{1}^{a_1}$.  Suppose $f_1 \in \mathcal{E}_\ell^{s(f_1)}$.  Then $s_\Gamma(g_1) =s(f_1)^\ell$ which implies that
\[
s_\Gamma( \mu) = s(f_1)^{\ell} = s_\Gamma(f^j ).
\]
By construction $r_\Gamma(\mu) = r(f)^j = r_\Gamma(f^j)$ and $\operatorname{par}(\mu) = \operatorname{par}( f^j)$, we have $\mu = f^j$.
\end{proof}

The following lemma, which we will use to prove LiMaR-splitting preserves Morita equivalence, was proven for Listhartke's version of outsplitting \cite[Lemma 4.0.1.1]{listhartke}. It holds in our more general setting as well.

\begin{lem}\label{lem-ck2-1}
Let $\Lambda$ be a $k$-graph that is degree $B$ sink-free if $k \geq 3$, and let $\Gamma$ be the LiMaR-split of $\Lambda$ at $w$ of degree $B$.  Then for all $x \in \Lambda^1$ and for all $1 \leq j \leq n(r(x))$ such that $s_\Gamma (x^j) = s_\Gamma(x^1)$,
\[
\sum_{ f \in r(x) \Lambda^{\mathbbm{1}} } t_{f^j} t_{(f^1)^*} t_{x^1} = t_{x^j}
\]
and
\[\sum_{ f \in r(x) \Lambda^{\mathbbm{1}} } t_{(x^1)^*} t_{f^1} t_{(f^j)^*} = t_{(x^j)^*} \]
in $\mathrm{KP}_\mathbb{F}(\Gamma)$.
\end{lem}

\begin{proof}
Let $x \in \Lambda^1$ and let $1 \leq j \leq n(r(x))$.  We claim 
\[ 
\sum_{ f \in r(x) \Lambda^{\mathbbm{1}} } t_{f^j} t_{(f^1)^*} t_{x^1} = \sum_{ \mu \in s(x^1) \Gamma^{\mathbbm{1} - d(x)} } t_{x^j} t_{\mu} t_{\mu^*} t_{(x^1)^*} t_{x^1}. 
\]
Let $f \in \Lambda^{\mathbbm{1}}$ and write $f = f_k\cdots f_1$ with $\deg(f_k)=\deg(x)$.  Suppose $t_{(f^1)^*}t_{x^1} \neq 0$.  Then $f^1 = x^1 \mu$ for some $\mu \in s_\Gamma(x^1)\Gamma^{\mathbbm{1}-\deg(x)}$.  Since $s_\Gamma( x^j) = s_\Gamma(x^1)$, by Lemma~\ref{lem-uniq1}, $f^j = x^j \mu$. 

Suppose $\mu \in s_\Gamma(x^1)\Gamma^{\mathbbm{1}-\deg(x)}$.  Note that $x^1 \mu \in r_\Gamma(x^1) \Gamma^{\mathbbm{1}}$ and $x^j \mu  \in r_\Gamma(x^j) \Gamma^{\mathbbm{1}}$ since $s_\Gamma( x^j) = s_\Gamma(x^1)$.  Let $f = \operatorname{par}(x^1 \mu) = \operatorname{par}(x^j \mu)$.  By Lemma~\ref{lem-uniq2}, \[
f^j = x^j \mu
\]
and $t_{(f^1)^*} t_{x^1} \neq 0$.  Thus, proving the claim.

Using the above claim, 
\begin{align*}
\sum_{ f \in r(x) \Lambda^{\mathbbm{1}} } t_{f^j} t_{(f^1)^*} t_{x^1}  &= \sum_{ \mu \in s(x^1) \Gamma^{\mathbbm{1} - \deg(x)} } t_{x^j} t_\mu t_{\mu^*} t_{(x^1)^*}t_{x^1} \\
	&= \sum_{ \mu \in s(x^1) \Gamma^{\mathbbm{1} - \deg(x)}} t_{x^j} t_\mu t_\mu^* p_{ s_\Gamma(x^1) } \\
	&= \sum_{ \mu \in s(x^1) \Gamma^{\mathbbm{1} - \deg(x)}} t_{x^j} t_\mu t_\mu^* \\
	&= t_{x^j}.
\end{align*}
This proves the first equality. The proof of the second is similar.
\end{proof}

\begin{lem}\label{lem-ck2-2}
Let $\Lambda$ be a $k$-graph that is degree $B$ sink-free if $k \geq 3$, and let $\Gamma$ be the LiMaR-split of $\Lambda$ at $w$ of degree $B$.  Let $x \in \Lambda^1$.  Fix $1 \leq j \leq n(r(x))$.  Suppose $s_\Gamma(x^1)= s(x)^m$ and $s_\Gamma(x^j) = s(x)^n$.  Then 
\[
\left( \sum_{ f \in r(x)  \Lambda^{\mathbbm{1}}} t_{f^j} t_{(f^1)^*} \right) t_{x^1}\left( \sum_{ f \in s(x) \Lambda^{\mathbbm{1}} }  t_{f^m} t_{(f^1)^*} \right)  =  t_{x^j}  \sum_{ f \in s(x)  \Lambda^{\mathbbm{1}}}t_{f^n} t_{(f^1)^*}.
\]
in $\mathrm{KP}_\mathbb{F}(\Gamma)$.
\end{lem}

\begin{proof}
Suppose $\deg(x)=B$.  Then $m= n$ and by Lemma~\ref{lem-ck2-1}, 
\begin{align*}
\left( \sum_{ f \in r(x)  \Lambda^{\mathbbm{1}}} t_{f^j} t_{(f^1)^*} \right) t_{x^1}\left( \sum_{ f \in s(x) \Lambda^{\mathbbm{1}} }  t_{f^m} t_{(f^1)^*} \right)  &=  t_{x^j}  \sum_{ f \in s(x)  \Lambda^{\mathbbm{1}}}t_{f^m} t_{(f^1)^*}\\
&= t_{x^j}  \sum_{ f \in s(x)  \Lambda^{\mathbbm{1}}}t_{f^n} t_{(f^1)^*}.
\end{align*}

Suppose $\deg(x)\neq B$.  
Suppose $t_{(f^1)^*}t_{x^1} \neq 0$.  Then $f = xh$ for some $h=h_{k-1} \cdots h_1 \in s(x)\Lambda^{\mathbbm{1}-\deg(x)}$.  Note $f^1 = x^1 h^m$ and $f^j = x^j h^n$.  By Lemma~\ref{lem-outsplit-uniq-source}, for all $h \in \Lambda^{\mathbbm{1}-\deg(x)}$, there exists a positive integer $\ell(h)$ such that $s_{\Gamma}(h^x) = s_{\Gamma}(h^y)= s(h)^{\ell(h)}$ for all $x,y$.  Therefore,
\begin{align*}
&\left( \sum_{ f \in r(x) \Lambda^{\mathbbm{1}}}  t_{f^j} t_{(f^1)^*} \right) t_{x^1} \left( \sum_{ f \in s(x) \Lambda^{\mathbbm{1}} }  t_{f^m} t_{(f^1)^*} \right) \\
&= \sum_{ h \in s(x)\Lambda^{\mathbbm{1}-\deg(x)} } t_{x^j} t_{h^n } t_{(h^m)^*}\left( \sum_{ f \in s(x) \Lambda^{\mathbbm{1}} }  t_{f^m} t_{(f^1)^*} \right).
\end{align*}

Suppose $h=h_{k-1} \cdots h_1 \in s(x)\Lambda^{\mathbbm{1}-\deg(x)}$ and $t_{h^m}^* t_{f^m} \neq 0$.  Write $f = f_k \cdots f_1$ where $\deg(f_i) = \deg(h_{i-1})$ and $\deg(f_1)=\deg(x)$.  Then $f^m= (f_k\cdots f_2)^m f_1^{a_1}$.  Since $t_{h^m}^* t_{f^m} \neq 0$ we must have $(f_k\cdots f_2)^m = h^m$ which implies $\operatorname{par}( (f_k\cdots f_2)^m ) = \operatorname{par}(h^m)=h$ and $s(h)^{a_1} = s(f_k\cdots f_2)^{a_1} = s_\Gamma( (f_k\cdots f_2)^m ) ) = s_\Gamma(h^m)= s(h)^{\ell(h)}$.  Thus, $f = h f_1$ and $a_1 = \ell(h)$.  Hence, $h^n f_1^{\ell(h)} \in \Gamma^{\mathbbm{1}}$ and by Lemma~\ref{lem-uniq2}, $f^n = h^n f_1^{\ell(h)}$.  

Using the above observations, we have
\begin{align*}
&\left( \sum_{ f \in r(x) \Lambda^{\mathbbm{1}}}  t_{f^j} t_{(f^1)^*} \right) t_{x^1} \left( \sum_{ f \in s(x) \Lambda^{\mathbbm{1}} }  t_{f^m} t_{(f^1)^*} \right) \\
&= \sum_{ h \in s(x)\Lambda^{\mathbbm{1}-\deg(x)} }  t_{x^j} t_{h^n } t_{(h^m)^*}\left( \sum_{ f \in s(x) \Lambda^{\mathbbm{1}} }  t_{f^m} t_{(f^1)^*} \right) \\
&= \sum_{  h \in s(x)\Lambda^{\mathbbm{1}-\deg(x)} }  \sum_{c \in s(h) \Lambda^{\deg(x)} }  t_{x^j} t_{h^n } t_{c^{\ell(h)}} t_{ ((hc)^1)^*}   \\
&= \sum_{  h \in s(x)\Lambda^{\mathbbm{1}-\deg(x)}  }  \sum_{c \in s(h) \Lambda^{\deg(x)} }  t_{x^j} t_{(hc)^n}  t_{ ((hc)^1)^*} \\
&= t_{x^j} \sum_{ f \in s(x) \Lambda^{\mathbbm{1}} }  t_{f^n} t_{(f^1)^*}\qedhere
\end{align*}
\end{proof}

\begin{lem}\label{lem-paths}
Let $\Lambda$ be a $k$-graph that is degree $B$ sink-free if $k \geq 3$, and let $\Gamma$ be the LiMaR-split of $\Lambda$ at $w$ of degree $B$.  For all $ e\in \Lambda^1$, set $T_e =  \sum_{ f \in s(e)\Lambda^{\mathbbm{1}}} t_{e^1} t_{ f^j } t_{(f^1)^*}$, where $s_\Gamma(e^1) = s(e)^j$.  If $h= h_m\cdots h_1 \in \Lambda$, then 
\[
T_{h_m} \cdots T_{h_1}  = t_{ h^1} \sum_{ f \in s(h) \Lambda^{\mathbbm{1}}}  t_{f^\ell} t_{(f^1)^*}
\]
in $\mathrm{KP}_\mathbb{F}(\Gamma)$, where $s_\Gamma(h^1) = s(h)^\ell$.
\end{lem}

\begin{proof}
We prove this by induction.  If the length of $h$ is $1$, this is clear from the definition of $T_e$.  Assume the statement of the lemma is true for all paths of length $m$.  Let $h = h_{m+1} \cdots h_1= ge$, where $g$ has length $m$ and $e$ has length $1$.  Then 
\[
T_{h_{m+1}} \cdots T_{h_1} = t_{g^1} \left(\sum_{ f \in s(g) \Lambda^{\mathbbm{1}}}  t_{f^\ell} t_{(f^1)^*}\right)t_{e^1}  \left( \sum_{ f \in s(e)\Lambda^{\mathbbm{1}}} t_{ f^j } t_{(f^1)^*} \right)
\]
where $s(g)^\ell= s_\Gamma(g^1)$ and $s(e)^j= s_\Gamma(e^1)$.  Note that $h^1 = g^1 e^\ell$.  Let $n$ be a positive integer such that $s(e)^n = s_\Gamma(e^\ell)$.  By Lemma~\ref{lem-ck2-2}, 
\[
\left(\sum_{ f \in s(g) \Lambda^{\mathbbm{1}}}  t_{f^\ell} t_{(f^1)^*}\right)t_{e^1}  \left( \sum_{ f \in s(e)\Lambda^{\mathbbm{1}}} t_{ f^j } t_{(f^1)^*} \right) = t_{e^{\ell}} \sum_{ f \in s(e) \Lambda^{\mathbbm{1}} } t_{f^n} t_{(f^1)^*}.
\] 
Hence,
\begin{align*}
T_{h_{m+1}} \cdots T_{h_1} &= t_{g^1} \left(\sum_{ f \in s(g) \Lambda^{\mathbbm{1}}}  t_{f^\ell} t_{(f^1)^*}\right)t_{e^1}  \left( \sum_{ f \in s(e)\Lambda^{\mathbbm{1}}} t_{ f^j } t_{(f^1)^*} \right) \\
&= t_{g^1}t_{e^{\ell}} \sum_{ f \in s(e) \Lambda^{\mathbbm{1}} } t_{f^n} t_{(f^1)^*} \\
	&= t_{(ge)^1} \sum_{ f \in s(e) \Lambda^{\mathbbm{1}} } t_{f^n} t_{(f^1)^*} \\
	&= t_{h^1} \sum_{ f \in s(e) \Lambda^{\mathbbm{1}} } t_{f^n} t_{(f^1)^*}
\end{align*}
and $s_{\Gamma}(h^1) = s_\Gamma( (ge)^1) = s_\Gamma(e^\ell) = s(e)^n$.
\end{proof}

\begin{lem}\label{lem-paths-2}
Let $\Lambda$ be a $k$-graph that is degree $B$ sink-free if $k \geq 3$, and let $\Gamma$ be the LiMaR-split of $\Lambda$ at $w$ of degree $B$.   Let $v \in \Lambda^0$ and let $1 \leq j \leq n(v)$.  Then 
\[
\{ g \in \Gamma^{\mathbbm{1}} : r_\Gamma(g)  = v^j \} = \{ f^j : f \in v\Lambda^{\mathbbm{1}} \}.
\] 
\end{lem}

\begin{proof}
For $f \in v\Lambda^{\mathbbm{1}}$, $f^j \in \Gamma^{\mathbbm{1}}$ such that $r_\Gamma(f^j) = r(f)^j = v^j$.  Suppose $g \in \Gamma^{\mathbbm{1}}$ such that $r_\Gamma(g)  = v^j$.  Set $f = \operatorname{par} (g)$.  By Lemma~\ref{lem-uniq2}, $f \in v \Lambda^{\mathbbm{1}}$ such that $f^j = g$.  
\end{proof}

\begin{thm}\label{thm-KP}
Let $\Lambda$ be a source-free, degree $B$ sink-free if $k \geq 3$, row-finite $k$-graph, and let $\Gamma$ be the LiMaR-split of $\Lambda$ at $w$ of degree $B$. Let $\{p_v,s_e : v \in \Lambda^0, e \in \Lambda^1\}$ and $\{q_{v^i},t_{e^j} : v^i \in \Gamma^0, e^j \in \Gamma^1\}$ denote the universal Kumijan-Pask $\Lambda$- and $\Gamma$-families, respectively. There exists an injective $\mathbb{Z}^k$-graded homomorphism from $\psi \colon \mathrm{KP}_\mathbb{F}(\Lambda) \to \mathrm{KP}_\mathbb{F}(\Gamma)$ such that $\psi(p_v)= q_{v^1}$, $\psi( s_e ) = \sum_{ f \in s(e)\Lambda^{\mathbbm{1}}} t_{e^1} t_{ f^j } t_{(f^1)^*}$, and $\psi( s_{e^*} ) = \sum_{ f \in s(e)\Lambda^{\mathbbm{1}}}   t_{f^1}t_{ (f^j)^* }t_{(e^1)^*}$, where $s_\Gamma(e^1)= s(e)^j$.
\end{thm}

\begin{proof}
Set $P_{v} = q_{v^1}$, $S_e =  \sum_{ f \in s(e)\Lambda^{\mathbbm{1}}} t_{e^1} t_{ f^j } t_{(f^1)^*}$, and $S_{e^*}=\sum_{ f \in s(e)\Lambda^{\mathbbm{1}}} t_{f^1}t_{ (f^j)^* }t_{(e^1)^*}$, where $s_\Gamma(e^1)= s(e)^j$. To get $\psi$, by Lemma~\ref{lem-KPprime}, it is enough to show that $\{Q_v, T_e : v \in \Lambda^0, e \in \Lambda^1 \}$ satisfies \ref{KP1'}, \ref{KP2'}, \ref{KP3'}, \ref{KP4'}.  Since $\{ Q_v = q_{v^1} : v \in \Lambda^0 \}$ is a sub-collection of the vertex projections in $\mathrm{KP}_\mathbb{F}(\Gamma)$, they are mutually orthogonal idempotents.  Hence, \ref{KP1'} holds.

Next, we prove \ref{KP2'}.  Let $a,x,b,y \in \Lambda^1$.  Then 
\begin{align*}
P_{r(a)} S_a &= \sum_{ f \in s(a)\Lambda^{\mathbbm{1}}} q_{r(a)^1} t_{a^1} t_{ f^j } t_{(f^1)^*} = \sum_{ f \in s(a)\Lambda^{\mathbbm{1}}} q_{r_\Gamma(a^1)} t_{a^1} t_{ f^j } t_{(f^1)^*} = T_a \\
S_a P_{s(a)} &= \sum_{ f \in s(a)\Lambda^{\mathbbm{1}}}  t_{a^1} t_{ f^j } t_{(f^1)^*} q_{s(a)^1} = \sum_{ f \in s(a)\Lambda^{\mathbbm{1}}}  t_{a^1} t_{ f^j } t_{(f^1)^*} q_{r_{\Gamma}(f^1)} = T_a \\
P_{s(a)} S_{a^*} &= \sum_{ f \in s(a)\Lambda^{\mathbbm{1}}}   q_{s(a)^1}t_{f^1}t_{ (f^j)^* }t_{(a^1)^*} = \sum_{ f \in s(a)\Lambda^{\mathbbm{1}}}   P_{r_\Gamma(f^1)}t_{f^1}t_{ (f^j)^* }t_{(a^1)^*} =T_{a^*} \\
S_{a^*} P_{r(a)} &= \sum_{ f \in s(a)\Lambda^{\mathbbm{1}}}   t_{f^1}t_{ (f^j)^* }t_{(a^1)^*}  q_{r(a)^1} = \sum_{ f \in s(a)\Lambda^{\mathbbm{1}}}   t_{f^1}t_{ (f^j)^* }t_{(a^1)^*}  q_{r_\Gamma(a^1)} = T_{a^*}.
\end{align*}
Assume $ax \sim by$.  Let $s_\Gamma(a^1)= s(a)^j$ and $s_\Gamma(b^1) =s(b)^\ell$.  Then $(ax)^1 = a^1 x^j$ and $(by)^1=b^1y^\ell$ are paths in $G_\Gamma$.  By Lemma~\ref{lemma: KG2AB}, Lemma~\ref{lemma: KG2AA}, and Definition~\ref{def: 1-skeletonlimarsplit}, $a^1 x^j \sim b^1y^\ell$.  Let $i, m, n$ be positive integers such that $s_\Gamma(x^1) = s(x)^m$, $s_\Gamma(x^j) =s(x)^n$, $s_\Gamma(y^1)=s(y)^i$.  Since $a^1 x^j \sim b^1y^\ell$, $s_\Gamma(y^{\ell}) = s_\Gamma(x^j) =s(x)^n$.  Then \begin{equation*}
S_a S_x =t_{a^1} t_{x^j} \sum_{ f \in s(x) \Lambda^{\mathbbm{1}}} t_{f^n}t_{(f^1)^*} 
	= t_{b^1} t_{y^\ell } \sum_{ f \in s(y) \Lambda^{\mathbbm{1}}} t_{f^n}t_{(f^1)^*} 
	= S_{b} S_y, 
\end{equation*}
where the 1st and 3rd equalities follow from Lemma~\ref{lem-paths}.  Hence, \ref{KP2'} holds.

For \ref{KP3'}, let $a, b \in \Lambda^1$.  Then 
\begin{align*}
S_{a^*}S_a &= \left( \sum_{ f \in s(a)\Lambda^{\mathbbm{1}}} t_{f^1} t_{ (f^j)^* } t_{(a^1)^*} \right)\left( \sum_{ f \in s(a)\Lambda^{\mathbbm{1}}} t_{a^1} t_{ f^j } t_{(f^1)^*}\right) \\
&= \sum_{ f , g\in s(a)\Lambda^{\mathbbm{1}}}  t_{ f^1 } t_{(f^j)^*}t_{(a^1)^*} t_{a^1} t_{ g^j } t_{(g^1)^*} = \sum_{ f , g\in s(a)\Lambda^{\mathbbm{1}}} t_{ f^1 } t_{(f^j)^*}q_{ s_\Gamma(a^1) }  t_{ g^j } t_{(g^1)^*} \\
&= \sum_{ f , g\in s(a)\Lambda^{\mathbbm{1}}}  t_{ f^1 } t_{(f^j)^*}q_{ r_\Gamma(g^j) }  t_{ g^j } t_{(g^1)^*} = \sum_{ f , g\in s(a)\Lambda^{\mathbbm{1}}}  t_{ f^1 } t_{(f^j)^*}t_{ g^j } t_{(g^1)^*} \\
&= \sum_{ f \in s(a)\Lambda^{\mathbbm{1}}}  t_{ f^1} q_{s_{\Gamma(f^j)} }  t_{(f^1)^*} \\
		&\qquad \text{by Lemma~\ref{lem-outsplit-uniq-source}} \\
		&=  \sum_{ f \in s(a)\Lambda^{\mathbbm{1}}}  t_{ f^1 } q_{s_{\Gamma(f^1)} }  t_{(f^1)^*}  =  \sum_{ f \in s(a)\Lambda^{\mathbbm{1}}}  t_{ f^1}  t_{(f^1)^*}.
\end{align*} 
By Lemma~\ref{lem-paths-2},
\[
S_{a^*}S_a = \sum_{ f \in s(a)\Lambda^{\mathbbm{1}}}  t_{ f^1}  t_{(f^1)^*} = \sum_{ g \in s(a)^1 \Gamma^{\mathbbm{1}}} t_{g} t_{g^*} = q_{s(a)^1 } = P_{s(a)}.
\]
Moreover,
\begin{align*}
S_{a^*}S_b &= \left( \sum_{ f \in s(a)\Lambda^{\mathbbm{1}}} t_{f^1} t_{ (f^j)^* } t_{(a^1)^*} \right)\left( \sum_{ f \in s(b)\Lambda^{\mathbbm{1}}} t_{b^1} t_{ f^\ell } t_{(f^1)^*}\right) \\
&= \left( \sum_{ f \in s(a)\Lambda^{\mathbbm{1}}} t_{f^1} t_{ (f^j)^* }  \right)t_{(a^1)^*} t_{b^1}\left( \sum_{ f \in s(b)\Lambda^{\mathbbm{1}}}  t_{ f^\ell } t_{(f^1)^*}\right) = 0
\end{align*}
since $a \neq b$ which implies $a^1\neq b^1$.  Thus, showing that \ref{KP3'} holds.

Lastly, we prove \ref{KP4'}.  Let $v \in \Lambda^0$ and let $1 \leq i \leq k$.  Then 
\begin{align*}
\sum_{ e \in v \Lambda^{\mathbf{e}_i } } S_e S_{e^*} &= \sum_{ e\in v \Lambda^{\mathbf{e}_i } }  \left( \sum_{ f \in s(e)\Lambda^{\mathbbm{1}}} t_{e^1} t_{ f^j } t_{(f^1)^*}\right)\left( \sum_{ f \in s(e)\Lambda^{\mathbbm{1}}} t_{f^1}  t_{ (f^j)^* } t_{(e^1)^*} \right) \\
&= \sum_{ e\in v \Lambda^{\mathbf{e}_i } } t_{e^1} \left( \sum_{ f \in s(e) \Lambda^{\mathbbm{1}} } t_{f^j} t_{(f^1)^*} t_{f^1} t_{(f^j)^*} \right) t_{(e^1)^*} \\
&= \sum_{ e\in v \Lambda^{\mathbf{e}_i } } t_{e^1} \left( \sum_{ f \in s(e) \Lambda^{\mathbbm{1}} } t_{f^j}  q_{s_{\Gamma(f^1)}}  t_{(f^j)^*} \right) t_{(e^1)^*}  \\
&=  \sum_{ e\in v \Lambda^{\mathbf{e}_i } } t_{e^1} \left( \sum_{ f \in s(e) \Lambda^{\mathbbm{1}} } t_{f^j}  q_{s_{\Gamma(f^j)}}  t_{(f^j)^*} \right) t_{(e^1)^*}  \\
&\qquad \text{by Lemma~\ref{lem-outsplit-uniq-source}} \\
&=  \sum_{ e\in v \Lambda^{\mathbf{e}_i } } t_{e^1} \left( \sum_{ f \in s(e) \Lambda^{\mathbbm{1}} } t_{f^j}   t_{(f^j)^*} \right) t_{(e^1)^*}  \\
& \qquad \text{Lemma~\ref{lem-paths-2}}\\
&= \sum_{ e\in v \Lambda^{\mathbf{e}_i } } t_{e^1} \left( \sum_{ g \in s(e)^j \Gamma^{\mathbbm{1}} } t_{g}   t_{g^*} \right) t_{(e^1)^*}  = \sum_{ e\in v \Lambda^{\mathbf{e}_i } } t_{e^1} q_{ s(e)^j } t_{(e^1)^*} \\
&= \sum_{ e\in v \Lambda^{\mathbf{e}_i } } t_{e^1} q_{ s_\Gamma(e^1) } t_{(e^1)^*}= \sum_{ e\in v \Lambda^{\mathbf{e}_i } } t_{e^1} t_{(e^1)^*} = \sum_{g \in v^1 \Gamma^{\mathbf{e}_i}} t_{g} t_{g^*} \\
&= q_{v^1}= P_v.
\end{align*}

Note that $\psi(p_v)= q_{v^1} \in \mathrm{KP}_\mathbb{F}(\Gamma)_0$, $\psi( s_e ) = \sum_{ f \in s(e)\Lambda^{\mathbbm{1}}} t_{e^1} t_{ f^j } t_{(f^1)^*} \in \mathrm{KP}_\mathbb{F}(\Gamma)_{\mathrm{deg}(e)}$, and $\psi( s_{e^*} ) = \sum_{ f \in s(e)\Lambda^{\mathbbm{1}}}   t_{f^1}t_{ (f^j)^* }t_{(e^1)^*} \in \mathrm{KP}_\mathbb{F}(\Gamma)_{-\mathrm{deg}(e)}$, where $s_\Gamma(e^1)= s(e)^j$.  As $\{p_v, t_e : v \in \Lambda^0, e \in \Lambda^1 \}$ generates $\mathrm{KP}_\mathbb{F}(\Gamma)$, we get $\psi$ is a $\ZZ^k$-graded homomorphism.  Since $\mathbb{F}$ is a field and $\psi( p_v) = q_{v^1} \neq 0$ for all $v \in \Lambda^0$, then $\psi$ is an injection by \cite[Theorem~4.1]{KPAlgebras}.
\end{proof}

\begin{defn}
Let $(\Lambda,d)$ be a $k$-graph. Let $\{q,s\}$ be the universal Kumjian-Pask $\Lambda$-family, and let $\{Q,S\}$ be the universal Cuntz-Krieger $\Lambda$-family. We set the following notation for the diagonals associated with  $\Lambda$:
\begin{enumerate}
    \item the \textit{algebraic diagonal of $\Lambda$}
\[\mathcal{D}_{alg}(\Lambda) := \mathrm{span}_\mathbb{C}\{S_\lambda S_\lambda^* : \lambda \in \Lambda\} \subset C^*(\Lambda)\]
\item the \textit{diagonal of $\Lambda$}
\[ \mathcal{D}(\Lambda) := \overline{\mathcal{D}_{\mathrm{alg}}(\Lambda)} \subset C^*(\Lambda)\]
\item the \textit{diagonal of the Kumjian-Pask algebra}
\[ \mathcal{D}(\mathrm{KP}_\mathbb{F}(\Lambda)) := span_\mathbb{F}\{s_\lambda s_{\lambda^*} : \lambda \in \Lambda\} \subset \mathrm{KP}_\mathbb{F}(\Lambda)\]
\end{enumerate}
\end{defn}

We now compute the image of the injective graded homomorphism constructed in Theorem~\ref{thm-KP} and show it is also diagonal preserving.

\begin{thm}\label{thm-KPA-new}
Let $\Lambda$ be a source-free, degree $B$ sink-free if $k \geq 3$, row-finite $k$-graph, and let $(\Gamma, d_\Gamma)$ be the result of LiMaR-splitting $\Lambda$ in degree $B$. Let $\{p_v,s_e : v \in \Lambda^0, e \in \Lambda^1\}$ and $\{q_{v^i},t_{e^j} : v^i \in \Gamma^0, e^j \in \Gamma^1\}$ denote the universal Kumijan-Pask $\Lambda$- and $\Gamma$-families, respectively.  Let $X = \{p_{v^1} : v \in \Lambda^0\}$, and let $P_X = \sum_{v\in X} q_{v}$.  Then there exists a diagonal-preserving $\mathbb{Z}^k$-graded isomorphism $\psi \colon \mathrm{KP}_{\mathbb{F}} (\Lambda) \to P_X \mathrm{KP}_{\mathbb{F}}(\Gamma)P_X$ such that $\psi( \mathcal{D}(\mathrm{KP}_{\mathbb{F}} (\Lambda) ) =P_X \mathcal{D}(\mathrm{KP}_{\mathbb{F}} (\Gamma))$.   
\end{thm}

\begin{proof} 
By Theorem~\ref{thm-KP}, there exists an injective, graded homomorphism $\psi \colon \mathrm{KP}_{\mathbb{F}} (\Lambda) \to \mathrm{KP}_{\mathbb{F}} (\Gamma)$.  We will show that $\mathrm{Im} (\psi) = P_X \mathrm{KP}_\mathbb{F}(\Gamma)P_X$ and $\psi( \mathcal{D}(\mathrm{KP}_{\mathbb{F}} (\Lambda) ) =P_X \mathcal{D}(\mathrm{KP}_{\mathbb{F}} (\Gamma))$.

First, we prove that $\mathrm{Im}(\psi) \subseteq P_X \mathrm{KP}_\mathbb{F}(\Gamma) P_X$. Fix $w \in \Lambda^0$. Then $\psi(p_{w})=q_{w^1}$. Since $q_{w^1}$ is an idempotent, then $q_{w^1}=q_{w^1}q_{w^1}q_{w^1}$, and since the idempotents in the Kumjian-Pask $\Gamma$-family are orthogonal we have that $q_{w^1}q_{w^1}q_{w^1}=\left(\sum\limits_{v\in X}q_v \right) q_{w^1}\left(\sum\limits_{v\in X}q_v \right) =P_Xq_{w^1}P_X$. Otherwise, for $\lambda \in \Lambda^{\neq \mathbf{0}}$ with $s_\Gamma(\lambda^1) = s(\lambda)^j$ we have by Lemma~\ref{lem-paths} that
\begin{multline*}
    \psi(s_\lambda)
    =\sum\limits_{f\in s(\lambda)\Lambda^{\mathbb{1}}}t_{\lambda^1}t_{f^j}t_{(f^1)^*}
    =\sum\limits_{f\in s(\lambda)\Lambda^{\mathbb{1}}}
q_{r(\lambda^1)}t_{\lambda^1}t_{f^j}t_{(f^1)^*}q_{r(f^1)}\\
    =\sum\limits_{f\in s(\lambda)\Lambda^{\mathbb{1}}}
    \left( \left(\sum\limits_{v\in X}q_v\right) t_{\lambda^1}t_{f^j}t_{(f^1)^*}\left(\sum\limits_{v\in X}q_v \right)\right) \\
    = P_X\left(\sum\limits_{f\in s(\lambda)\Lambda^{\mathbb{1}}}t_{\lambda^1}t_{f^j}t_{(f^1)^*}\right)P_X
    =P_X\psi(s_\lambda)P_X,
\end{multline*} 
where the third equality follows because $q_{v}t_{\lambda^1} = 0$ for any $v \neq r_\Gamma(\lambda^1) = r(\lambda)^1$, and similarly $t_{(f^1)^*}p_v = 0$ for any $v \neq r_\Gamma(f^1) =s(\lambda)^1$. We conclude that $\mathrm{Im}(\psi)\subseteq P_X\mathrm{KP}_\mathbb{F}(\Gamma)P_X$. 

Now we prove $P_X\mathrm{KP}_\mathbb{F}(\Gamma)P_X \subseteq \mathrm{Im}(\psi)$, we first note that $P_X\mathrm{KP}_\mathbb{F}(\Gamma)P_X = \mathrm{span}_\mathbb{F}\{t_{\gamma}t_{\delta^*}: \gamma, \delta \in \Gamma, s_{\Gamma}(\gamma) = s_{\Gamma}(\delta), \text{ and } r_{\Gamma}(\gamma), r_{\Gamma}(\delta) \in X \}$. Fix $t_\gamma t_{\delta^*} \in P_X \mathrm{KP}_\mathbb{F}(\Gamma)P_X$. If both $\gamma, \delta \in \Gamma^0$, then $\gamma = \delta = v^1$ for some $v \in \Lambda$, and $t_\gamma t_{\delta^*} = q_{v^1}q_{v^1} = q_{v^1} = \psi(p_v) \in \mathrm{Im}(\psi)$. If only one of $\gamma$ or $\delta$ belongs to $\Gamma^0$, without loss of generality assume $\gamma \in \Gamma^0$ and $\delta \in \Gamma^{\neq \mathbf{0}}$. Then $\gamma = v^1$ for some $v \in \Lambda$ and so $s_\Gamma(\delta) = s_\Gamma(\gamma) = v^1.$ Moreover, $r_\Gamma(\delta) \in X$ implies that $\delta = \mu^1$ for some $\mu \in \Lambda^{\neq \mathbf{0}}$, and 

\begin{equation*}
    \psi(p_v) \psi(s_{\mu^*}) = q_{v^1} \sum_{f \in v \Lambda^\mathbb{1}}t_{f^1}t_{(f^1)^*}t_{(\mu^1)^*} = q_{v^1}q_{v^1}t_{(\mu^1)^*} = t_\gamma t_{\delta^*}
\end{equation*}
because $p_{v^1}$ is a idempotent and $\mathrm{KP}_\mathbb{F}(\Gamma)$ satisfies \ref{KP4}. Now assume both $\gamma, \delta \in \Gamma^{\neq \mathbf{0}}$. Since $r_\Gamma(\gamma), r_\Gamma(\delta) \in X$, then $\gamma = \lambda^1$ for some $\lambda \in \Lambda^{\neq \mathbf{0}}$ and $\delta = \mu^1$ for some $\mu \in \Lambda^{\neq \mathbf{0}}$. Let $s_\Gamma(\gamma) = s_\Gamma(\delta) = v^j$ for some $v \in \Lambda^0$. Then by Lemma~\ref{lem-paths}, we have

\begin{multline*}
    \psi(s_\lambda) \psi(s_{\mu^*}) = \sum_{f \in v \Lambda^\mathbb{1}} t_{\lambda^1} t_{f^j} t_{(f^1)^*} \sum_{g \in v\Lambda^\mathbb{1}}t_{g^1}t_{(g^j)^*}t_{(\mu^1)^*} \\
    = \sum_{f \in v \Lambda^\mathbb{1}} t_{\lambda^1} t_{f^j} t_{(f^1)^*} t_{f^1}t_{(f^j)^*}t_{(\mu^1)^*} \quad \text{ by \ref{KP3}} \\
    =\sum_{f \in v\Lambda^\mathbb{1}} t_{\lambda^1} t_{f^j} q_{s_\Gamma(f^1)}t_{(f^j)^*}t_{(\mu^1)^*} \quad \text{ by \ref{KP3}} \\
    =\sum_{f \in v \Lambda^\mathbb{1}} t_{\lambda^1} t_{f^j} t_{(f^j)^*}t_{(\mu^1)^*} \quad \text{ by Lemmas \ref{lem-outsplit-uniq-source} and \ref{lem: consequences of CKs}} \\
    =t_{\lambda^1} \left(\sum_{f \in v \Lambda^\mathbb{1}}  t_{f^j} t_{(f^j)^*}\right)t_{(\mu^1)^*}  = t_{\lambda^1} q_{v^j}t_{(\mu^1)^*}  = t_{\lambda^1} t_{(\mu^1)^*} \quad \text{ by \ref{KP4} and Lemma \ref{lem: consequences of CKs}}. 
\end{multline*}

This proves $\{t_{\gamma}t_{\delta^*}: \gamma, \delta \in \Gamma, s_{\Gamma}(\gamma) = s_{\Gamma}(\delta), \text{ and } r_{\Gamma}(\gamma), r_{\Gamma}(\delta) \in X \} \subseteq \mathrm{Im}(\psi)$. Hence, every element in $P_X\mathrm{KP}_\mathbb{F}(\Gamma)P_X = \mathrm{span}\{t_{\gamma}t_{\delta^*}: \gamma, \delta \in \Gamma, s_{\Gamma}(\gamma) = s_{\Gamma}(\delta), \text{ and } r_{\Gamma}(\gamma), r_{\Gamma}(\delta) \in X \}$ is in $\mathrm{Im}(\psi)$. Consequently, $\mathrm{Im} (\psi)=P_X\mathrm{KP}_\mathbb{F}(\Gamma)P_X$.

Lastly, we prove that $\psi( \mathcal{D}(\mathrm{KP}_\mathbb{F}(\Lambda)) ) =P_X \mathcal{D}(\mathrm{KP}_\mathbb{F}(\Gamma))$. By the above computations, we see that $\psi ( s_\mu s_{\mu^*} ) = t_{\mu^1}t_{(\mu^1)^*} \in P_X \mathcal{D}(\mathrm{KP}_\mathbb{F}(\Gamma))$ and if $\gamma \in \Gamma$ and $r_{\Gamma}(\gamma) \in X$, then $\gamma= \mu^1$ and $t_\gamma t_{\gamma^*} = \psi( s_\mu s_{\mu^*})$.  Consequently, $\psi( \mathcal{D}(\mathrm{KP}_\mathbb{F}(\Lambda)) ) =P_X \mathcal{D}(\mathrm{KP}_\mathbb{F}(\Gamma))$.
\end{proof}

\begin{cor}\label{cor:higher-rank-main}
Let $\Lambda$ be a $k$-graph that is degree $B$ sink-free if $k\geq 3$, and let $\Gamma$ be the LiMaR-split of $\Lambda$ at $w$ of degree $B$. Let $X = \{v^1 : v \in \Lambda^0\}$, and let $P_X = \sum_{v\in X} Q_v$, where $\{Q_{v^i}, T_{e^j}: v^i \in \Gamma^0, e^j \in \Gamma^1\}$ is the universal Cuntz-Krieger $\Gamma$-family. Then there exists a $\ast$-isomorphism $\overline{\psi} \colon C^*(\Lambda) \to P_XC^*(\Gamma)P_X$ such that $\psi( \mathcal{D}(\Lambda) ) =P_X \mathcal{D}(\Gamma)$ and 
$$
\beta_z \circ \overline{\psi} = \overline{\psi} \circ \alpha_z
$$
for all $z \in \mathbb{T}^k$.  Here, $\beta_z$ and $\alpha_z$ are the canonical gauge actions on $C^*(\Gamma)$ and $C^*(\Lambda),$ respectively. That is, $\alpha_z(t_e)=z^{d(e)}t_e$ and $\alpha_z(p_v)=p_v$ for $z\in\mathbb{T}^{k},e\in\Lambda^{1}$, and $v\in\Lambda^{0}$, and $\beta_z$ is defined similarly.
\end{cor}

\begin{proof}
By Theorem~\ref{thm-KPA-new} with $\mathbb{F}= \mathbb{C}$, there exists diagonal-preserving $\mathbb{Z}^k$-graded isomorphism $\psi \colon \mathrm{KP}_{\mathbb{C}} (\Lambda) \to P_X \mathrm{KP}_{\mathbb{C}}(\Gamma)P_X$ such that $\psi( \mathcal{D}(\mathrm{KP}_{\mathbb{C}} (\Lambda) ) =P_X \mathcal{D}(\mathrm{KP}_{\mathbb{C}} (\Gamma))$.  

Let $\{p_v,s_e : v \in \Lambda^0, e \in \Lambda^1\}$ and $\{q_{v^i},t_{e^j} : v^i \in \Gamma^0, e^j \in \Gamma^1\}$ denote the universal Kumijan-Pask $\Lambda$- and $\Gamma$-families, respectively and let $\{P_v,S_e : v \in \Lambda^0, e \in \Lambda^1\}$ denote the universal Cuntz-Krieger $\Lambda$-family.  By \cite[Proposition~7.3]{KPAlgebras} $\mathrm{KP}_{\mathbb{C}} (\Lambda)$ embeds as a dense $*$-subalgebra of $C^*(\Lambda)$ via the embedding $\iota_\Lambda$ defined by $\iota_\Lambda(p_v) = P_v$, $\iota_\Lambda(s_e) =S_e$, and $\iota_\Lambda(s_{e^*})=S_e^*$.  It is now clear that this embedding sends $\mathcal{D}(\mathrm{KP}_{\mathbb{C}} (\Lambda) )$ to a dense $*$-subalgebra of $\mathcal{D}(\Lambda)$.  Similarly, $\mathrm{KP}_{\mathbb{C}} (\Gamma)$ is a dense $*$-subalgebra of $C^*(\Lambda)$ via the embedding $\iota_\Gamma$ defined by $\iota_\Gamma(q_{v^i}) =Q_{v^i}$ and $\iota_\Gamma(t_{e^i})= T_{e^i}$, $\iota_{\Gamma}(t_{(e^i)^*}) = T_{e^i}^*$, and $\iota_\Gamma(\mathcal{D}(\mathrm{KP}_{\mathbb{C}} (\Gamma) ))$ is a dense $*$-subalgebra of $\mathcal{D}(\Gamma)$.  Hence, $\psi$ induces a diagonal-preserving $\ast$-isomorphism $\overline{\psi} \colon C^*(\Lambda) \to P_X C^*(\Gamma)P_X$.  

We now show that
$$
\beta_z \circ \overline{\psi} = \overline{\psi} \circ \alpha_z
$$
for all $z \in \mathbb{T}^k$.  First observe that if $\mu, \nu \in \Gamma$ with $d(\mu)-d(\nu)=m$, then 
\[
\beta_z( \iota_\Gamma( \mu \nu^*)) = \beta_z( t_\mu t_\nu^*) = z^{d(\mu)- d(\nu)} t_\mu t_\nu^* = z^m t_\mu t_\nu^*.
\]
Let $v \in \Lambda^0$ and $e \in \Lambda^1$.  Then $p_v \in \mathrm{KP}_{\mathbb{C}}(\Lambda)_0$ and $s_e \in \mathrm{KP}_{\mathbb{C}}(\Lambda)_1$ which implies that $\psi(v) \in \mathrm{KP}_{\mathbb{C}}(\Gamma)_0$ and $\psi(e) \in \mathrm{KP}_{\mathbb{C}}(\Gamma)_1$ as $\psi$ is a $\mathbb{Z}^k$-graded isomorphism.  So, 
\[
\beta_z \circ \overline{\psi}( P_v) = \beta_z \circ  \iota_\Gamma( \psi(p_v)) = \iota_\Gamma( \psi(p_v)) = \overline{\psi}( P_v) =  \overline{\psi}\circ \alpha_z( P_v)
\]
and
\[
\beta_z \circ \overline{\psi}( S_e) = \beta_z \circ  \iota_\Gamma( \psi(s_e)) = z \iota_\Gamma( \psi(s_e)) = z \overline{\psi}( S_e) =  \overline{\psi}( zS_e) =  \overline{\psi}\circ \alpha_z( S_e).
\]
Since $C^*(\Lambda)$ is generated by $\{ P_{v} , S_e : v \in \Lambda^0, e \in \Lambda^1 \}$, we get the desired result.
\end{proof}

For a $\ZZ^k$-graded ring $R$, we give $R \otimes \mathsf{M}_{\infty}$ the following matrix grading: for each $m \in \ZZ^k$, the $m$th component $R \otimes \mathsf{M}_{\infty}$ is $\mathsf{M}_\infty(R_m)$ the set of all infinite matrix with entries $R_m$ with finite support.

\begin{cor}\label{cor:stb}
Let $\Lambda$ be a source-free, degree $B$ sink-free if $k \geq 3$, row-finite $k$-graph, and let $\Gamma$ be the LiMaR-split of $\Lambda$ at $w$ of degree $B$. Then 
\begin{enumerate}
\item \label{stb1} there exists a diagonal-preserving $*$-isomorphism $\Psi \colon C^*(\Lambda) \otimes \Kk \to C^*(\Gamma) \otimes \Kk$ such that $(\beta_z \otimes \operatorname{id}) \circ \Psi = \Psi \circ (\alpha_z \otimes \operatorname{id})$, where \(\alpha, \beta\) are the canonical gauge action of $C^*(\Lambda)$ and $C^*(\Gamma)$ respectively, and 

\item \label{stb2} there is a diagonal-preserving $\mathbb{Z}^k$-graded $*$-isomorphism $\Phi \colon \mathrm{KP}_\mathbb{F}(\Lambda) \otimes \mathsf{M}_\infty \to \mathrm{KP}_\mathbb{F}(\Gamma) \otimes \mathsf{M}_\infty$.
\end{enumerate}

\end{cor}

\begin{proof}
By \cite[Theorem~6.1]{CR2021}, \eqref{stb1} and \eqref{stb2} are equivalent.  So, we prove \eqref{stb1}.  By Corollary~\ref{cor:higher-rank-main} and \cite[Corollary~11.3 (8) $\iff$ (9)]{CRST}, it is enough to prove that the ideal of the fixed point algebra $C^*(\Gamma)^\beta = \{a \in C^*(\Gamma) \mid \beta_z(a) = a \, \forall z \in \mathbb{T}\}$ generated by $P = \sum_{ v \in \Lambda^0 } p_{v^1}$ is $C^*(\Gamma)^\beta$.  To prove this, we prove that $p_x$ is in the ideal of $C^*(\Gamma)^\beta$ generated by $P = \sum_{ v \in \Lambda^0 } p_{v^1}$ for all $x \in \Gamma^{0}$.

Let $v \in \Lambda^0$.  By construction of $\Gamma$, the edges in $\Gamma$ with degree $B$ and range $v^i$ are the edges $e^i$ with $e \in v\Lambda^B$.  Fix $1 \leq i \leq n(v)$ and $e \in v \Lambda^B$, and set $T_{e, i} = s_{e^i} s_{e^1}^*$.  Note that
$\beta_z (T_{e,i}) = z^{B}s_{e^i} \overline{z}^B s_{e^1}^* = T_{e,i}$ and $T_{e,i}P = T_{e,i}$, so $T_{e,i}$ is in the ideal of $C^*(\Gamma)^\beta$ generated by $P$. Then $T_{e,i} T_{e,i}^* = s_{e^i} s_{e^i}^*$ proves that $s_{e^i} s_{e^i}^*$ is also in the ideal of $C^*(\Gamma)^\beta$ generated by $P$.  Therefore, $p_{v^i}$ is in the ideal of $C^*(\Gamma)^\beta$ generated by $P = \sum_{ v \in \Lambda^0 } p_{v^1}$ since 
\[p_{v^i} = \sum_{ e \in v \Lambda^B} s_{e^i} s_{e^i}^*.\]
Completing the proof of \eqref{stb1}.
\end{proof}

For each $k \in \mathbb{N}$, let $\overline{\Omega}_k$ be the $k$-graph $\{ (m,n) \in \ZZ^k \times \ZZ^k : m \leq n\}$ with degree map $d(m,n)=n-m$, and range and source maps $r(m, n) = (m,m)$ and $s(m,n) =(n,n)$. Let $\Lambda$ be a source-free, sink-free, row-finite $k$-graph with finitely many vertices.  Then $\overline{X}_{\Lambda}$ will denote the set consisting of all $k$-graph morphisms from $\overline{\Omega}_k$ to $\Lambda$.  We equip $\overline{X}_{\Lambda}$ with a topology generated by 
\[
\mathcal{Z}( (m,n), \lambda) := \{ x \in \overline{X}_\Lambda : x(m,n) = \lambda \},
\]
with $(m,n) \in \overline{\Omega}_k$ and $\lambda \in \Lambda^{n-m}$.  For $m \in \NN^k$, we denote by $\overline{\sigma}_\Lambda^m$ the homeomorphism from $\overline{X}_{\Lambda}$ to $\overline{X}_{\Lambda}$ by 
\[
\overline{\sigma}_\Lambda^m(x) (p ,q):= x(p+m, q+m)
\]
for $(p,q) \in \overline{\Omega}_k$ and $x \in \overline{X}_{\Lambda}$.

\begin{cor}\label{cor:dynamics}
Let $\Lambda$ be a source-free, sink-free, row-finite $k$-graph with finitely many vertices and let $\Gamma$ be the LiMaR-split of $\Lambda$ at $w$ of degree $B$.  Then there exists a homeomorphism $h \colon \overline{X}_\Lambda \to \overline{X}_\Gamma$ such that $\overline{\sigma}_\Gamma^m \circ h(x) = h \circ \overline{\sigma}_\Lambda^m(x)$ for all $m \in \NN^k$ and for all $x \in \overline{X}_\Lambda$.
\end{cor}

\begin{proof}
This follows from Corollary~\ref{cor:stb} and \cite[Theorem~6.1]{CR2021}.
\end{proof}

\textbf{Acknowledgments}: The authors would like to thank Elizabeth Gillaspy for bringing this research question to our attention and for her continued support.  We thank Ian Oberbillig, Samuel Joseph Lippert, and Sarah Reznikoff for their helpful comments. This work was supported by the St. Olaf Collaborative Undergraduate Research and Inquiry (CURI) Program with funding from the Kay Winger Blair Endowment.  E. Ruiz was partially supported by a Collaborative-NSF-BSF grant (DMS 2452325).

\bibliography{bibliography}
\bibliographystyle{plain}

\end{document}